\newtheorem{teo}{Theorem}[section]
\newtheorem{defin}[teo]{Definition}
\newtheorem{prop}[teo]{Proposition}
\newtheorem{cor}[teo]{Corollary}
\newtheorem{lemma}[teo]{Lemma}
\theoremstyle{definition}
\newtheorem{remark}[teo]{Remark}
\newcommand{\sym}{{\operatorname{Sym}}}
\numberwithin{equation}{section}
\newcommand{\meno}{^{-1}}
\newcommand{\ra}{\rightarrow}
\newcommand{\restr}[1]          {\vert_{#1}}
\newcommand{\lra}{\longrightarrow}
\title[Generic Torelli for quintics]{Generic Torelli for coverings of plane quintics ramified in two points}
\dedicatory{Dedicated to the memory of Alberto Collino}
\author{J.C. Naranjo$^{1,2}$}
 \address{Juan Carlos Naranjo \newline 1. Departament de Matem\`atiques i Inform\`atica,
Universitat de Barcelona, Gran Via de les Corts Catalanes, 585, 08007 Barcelona, Spain \newline 2. Centre de Recerca Matemàtica, Edifici C, Campus Bellaterra, 08193 Bellaterra, Spain }
 \email{jcnaranjo@ub.edu}
 \author{I. Spelta}
 \address{Irene Spelta \newline Centre de Recerca Matemàtica, Edifici C, Campus Bellaterra, 08193 Bellaterra, Spain  }
 \email{ispelta@crm.cat}
\begin{document}
\pagestyle{myheadings}

\begin{abstract}
The aim of this paper to prove that the ramified Prym map restricted to the locus of coverings of quintic plane curves ramified in 2 points is generically injective.
\end{abstract}
\maketitle

\section{Introduction}

Given a finite morphism of smooth projective  irreducible curves, one defines the attached Prym variety as the component of the origin of the kernel of the induced map between their corresponding Albanese (Jacobian) varieties. By construction, this is a polarized abelian variety whose geometry reflects many of the properties of the finite map and, in many cases, of the Brill-Noether loci of the source and target curves. This theory is specially rich in the case of unramified double coverings. Let us denote by $\mathcal R_g$ the moduli of isomorphism classes of pairs $(C,\eta)$, where $C\in \mathcal M_g$ and $\eta $ is a non-trivial $2$-torsion point in the Jacobian of $C$. This is equivalent to an irreducible double cover of $C$. After the seminal work of Mumford (\cite{mu}), the Prym map between moduli stacks:
\[
\mathcal P_g:\mathcal R_g \lra \mathcal A_{g-1},
\]
sending $[(C,\eta)]$ to  its Prym variety $P(C,\eta)$,
has been extensively studied. Although is known to be generically injective and non-injective for $g\ge 7$ very few is known about its behavior when restricted to special subvarieties. It is worthy to  mention that Beauville extended $\mathcal P_g$ to a partial compactification $\overline {\mathcal R}_g$ allowing some coverings of stable curves (see \cite{be_invent}). The main property of this extended map $\overline {\mathcal P}_g$ is its properness.

More recently, double coverings ramified in an even number $r>0$ of points have been considered from the Prym map scope.
Let us denote by $\mathcal R_{g,r}$ the moduli of ramified coverings of curves of genus $g$ ramified in $r$ points. This classifies isomorphism classes of elements $(C,\eta, B)$, where $C\in\mathcal M_g$, $\eta \in Pic^{\frac r2}(C) $ and $B$ is a reduced divisor in $\vert \eta^{\otimes 2}\vert $.  Nagaraj and Ramanan studied the $r=4$ case in \cite{nagaraj_ramanan} and later Marcucci and Pirola considered the injectivity of
\[
\mathcal P_{g,r}:\mathcal R_{g,r} \lra 
\mathcal A_{g-1+\frac r2}^{\delta}
\]
in general, see \cite{mp} ($\delta $ is the type of the polarization, it is of the form $(1,\ldots ,1,2,\ldots,2)$). They proved the generic Torelli Theorem for most of the values of $(g,r)$. Finally, Ortega and the first author proved in \cite{naranjo-ortega2} the global Torelli Theorem for $r\ge 6$. Moreover, the fibers of $\mathcal P_{g,r}$ have been studied in  \cite{fns} when $\dim \mathcal R_{g,r}>\dim \mathcal A_{g-1+\frac r2}^{\delta }$.

The structure of the fibers of $\mathcal P_{g,2}$ and $\mathcal P_{g,4}$ is rather complicated, there exist positive dimensional fibers (e.g. if $C$ is hyperelliptic), the restriction to the tetragonal locus $\mathcal R_{g,r}^{tet}$ has degree $3$, etc. Notice that $\mathcal P_{g,2}$ can be seen as the restriction of the classical (unramified)  extended Prym map $\overline {\mathcal P}_g$ to a divisor of the boundary $\overline {\mathcal R}_g \setminus  \mathcal R_g$. 
A first step in the analysis of these maps is to consider the differential $d\mathcal P_{g,r}$. As proved in \cite{mp}, the codifferential is  given by the multiplication map 
\begin{equation}\label{codifferenziale}
	d\mathcal P_{g,r}^* (C, \eta, B): Sym^2 H^0(C, \omega_C \otimes \eta) \lra H^0(C, \omega_C^2 \otimes \mathcal O(B)).
\end{equation}

Using \cite[Remark 2.2]{naranjo-ortega2}, is easy to show that, if the differential $d\mathcal P_{g,r}$ is not injective at $[(C, \eta, B)]$, then:
	\begin{itemize}
		\item[1.]  $r=2$ and $\eta=\mathcal O_C(x+y-z)$ for $x,y,z\in C$ or $r=4$ and $h^0(C,\eta)>0$. Otherwise 
		\item[2.] $r=2$ and $C$ is hyperelliptic, trigonal or a quintic plane curve or $r=4$ and $C$ is hyperelliptic.
	\end{itemize}
Observe that in the first case the line bundle $\eta $ is special and in the second case the curve $C$ has Clifford index $\le 1$. This is not a characterization, apart from the hyperelliptic case we have no information on the rest of the possibilities. 

Our aim in this paper is to clarify the situation for double coverings of plane quintic curves with $r=2$. We  prove first the injectivity of the differential of $\mathcal P_{6,2}$ at a generic element:

\begin{teo}(Infinitesimal Torelli, Theorem \ref{differenziale iniettivo})
	Let $[(C, \eta, B)]$ be  a  general element in $ \mathcal{RQ}_{6,2}$. Then $d\mathcal P_{g,2}$ is injective at $[(C, \eta, B)]$.
\end{teo}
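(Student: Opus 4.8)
\emph{Strategy and reduction.} Since $d\mathcal P_{6,2}$ and its codifferential are mutually transpose, $d\mathcal P_{6,2}$ is injective at $[(C,\eta,B)]$ exactly when the codifferential \eqref{codifferenziale}, namely the multiplication map
\[
\mu(C,\eta,B)\colon \sym^2 H^0(C,\omega_C\otimes\eta)\longrightarrow H^0(C,\omega_C^2\otimes\mathcal O(B)),
\]
is surjective. For a plane quintic $\omega_C=\mathcal O_C(2)$, so $L:=\omega_C\otimes\eta$ has degree $11=2g-1$; for $\eta$ general of degree $1$ (hence noneffective) one has $h^1(L)=h^0(\eta^{-1})=0$, whence $h^0(L)=6$ and $\dim\sym^2 H^0(L)=21$, while $h^0(\omega_C^2\otimes\mathcal O(B))=22-6+1=17$. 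Thus I must prove that a map $21\to 17$ is surjective. A preliminary point is that for general $(C,\eta)$ the bundle $L$ is base-point-free and very ample: $h^0(L(-p))=5$ and $h^0(L(-p-q))=4$ for all $p,q$ as soon as $\eta(-p)$ and $\eta(-p-q)$ are noneffective, which holds off a proper subvariety. Note that $\deg L=2g-1$ sits just below the range $\deg\ge 2g+1$ forcing projective normality, and that plane quintics are precisely one of the Clifford-index-$1$ curves in case 2 of the dichotomy of the Introduction; so no general positivity theorem applies and the plane geometry together with the genericity of $(\eta,B)$ must genuinely be used.

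\emph{The base-point-free pencil trick.} Choose a general pencil $W=\langle s_0,s_1\rangle\subset H^0(L)$ without common zero. Tensoring the Koszul sequence of $(s_0,s_1)$ by $L$ gives
\[
0\longrightarrow \mathcal O_C\longrightarrow L^{\oplus 2}\longrightarrow L^2\longrightarrow 0,
\]
and, using $H^1(L)=0$, the long exact sequence shows that $W\cdot H^0(L)\subset H^0(L^2)$ has dimension $2\cdot 6-1=11$ and that
\[
H^0(L^2)\big/\bigl(W\cdot H^0(L)\bigr)\;\cong\; H^1(\mathcal O_C)\;\cong\; H^0(\omega_C)^{\ast},
\]
of dimension $6$. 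Writing $H^0(L)=W\oplus V'$ with $\dim V'=4$, the surjectivity of $\mu$ is then equivalent to the surjectivity of the residual map $\sym^2 V'\to H^0(\omega_C)^{\ast}$, i.e.\ to the injectivity of its transpose
\[
\rho\colon H^0(\omega_C)=H^0(\mathcal O_C(2))\longrightarrow \sym^2 (V')^{\ast}.
\]

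\emph{An explicit form of $\rho$.} Since $W$ is base-point-free, the open sets $\{s_0\neq 0\}$ and $\{s_1\neq 0\}$ cover $C$, so the connecting map $\delta\colon H^0(L^2)\to H^1(\mathcal O_C)$ sends a section $\sigma$ to the \v{C}ech cocycle $[\sigma/(s_0s_1)]$. Pairing with a conic $q\in H^0(\omega_C)$ through Serre duality and the residue theorem yields, for $t,t'\in V'$,
\[
\rho(q)(t,t')=\sum_{s_0=0}\operatorname{Res}\!\Bigl(\tfrac{q\,t\,t'}{s_0\,s_1}\Bigr).
\]
Thus $\rho$ depends only on the six conics cut on $C$, the pencil $W$, and the $11$ zeros of $s_0$, and $\ker\rho=0$ becomes a concrete rank condition on a matrix whose entries are values of $q$ and of the $t_i$ at these points.

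\emph{The main obstacle.} Establishing $\ker\rho=0$ is the heart of the matter and the step I expect to be hardest, precisely because plane quintics are exceptional: for special $\eta$ (for instance $\eta=\mathcal O_C(x+y-z)$, the first case of the dichotomy) the form $\rho$ does acquire a kernel, so the vanishing cannot be formal and must exploit the generality of $\eta$ and $B$. Because surjectivity of $\mu$ is an open condition and $\mathcal{RQ}_{6,2}$ is irreducible (plane quintics form an irreducible family, $B$ varies in the irreducible $\operatorname{Sym}^2 C$, and monodromy acts transitively on the admissible square roots $\eta$), it suffices to verify the rank condition above for a single well-chosen triple $(C,\eta,B)$, after checking there that $\eta$ is noneffective and $L$ very ample. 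I would carry this out by an explicit, possibly computer-assisted, computation on a quintic with extra symmetry; semicontinuity then propagates surjectivity of $\mu$, hence injectivity of $d\mathcal P_{6,2}$, to the general element of $\mathcal{RQ}_{6,2}$.
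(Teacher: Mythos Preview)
Your reduction via the base-point-free pencil trick is correct and the strategy is sound: surjectivity of $\mu$ is open, $\mathcal{RQ}_{6,2}$ is irreducible, and one explicit verification would suffice. But the proof is incomplete precisely at the point you yourself flag as ``the heart of the matter'': you never show $\ker\rho=0$. Saying that you ``would carry this out by an explicit, possibly computer-assisted, computation'' is a plan, not a proof. Everything before that step is standard bookkeeping; the content of the theorem lies exactly in the step you leave undone.

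By contrast, the paper's argument avoids any single-example computation and is conceptual. Rather than attacking surjectivity of $\mu$ directly, it dualizes: a nonzero element of $\ker(d\mathcal P_{6,2})$ corresponds to an extension
\[
0\longrightarrow \eta^{-1}\longrightarrow F\longrightarrow \omega_C\otimes\eta\longrightarrow 0
\]
with zero coboundary, so $h^0(F)=6$ and $\det F=\omega_C$. Beauville's lemma (since $2\cdot 6-3>h^0(\omega_C)$) then produces a line subbundle $L\subset F$ with $h^0(L)\ge 2$ and torsion-free quotient. Combining Clifford with the constraint $h^0(L)+h^0(\omega_C L^{-1})\ge 6$ forces $(\deg L,h^0(L))\in\{(5,3),(7,4),(9,5),(11,6)\}$; the last three are killed by the absence of a $g^1_3$, by Lemma~\ref{eta_two_minus_one} (ruling out $\eta\cong\mathcal O(y+z-x)$ for general $B$), and by the condition $h^0(\omega_C\eta L^{-1})>0$. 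This leaves $L=\mathcal O_C(1)$, and a short argument shows the resulting vertical extension in $Ext^1(\mathcal O_C(1),\mathcal O_C(1))$ must split, so $F\cong\mathcal O_C(1)^{\oplus 2}$, which is incompatible with an injection $\eta^{-1}\hookrightarrow F$.

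What this buys over your approach: the genericity hypotheses actually used are made explicit (only Lemma~\ref{eta_two_minus_one}, i.e.\ that the line through $p_1,p_2$ meets $C$ away from bitangent contact points), and the extension framework is the same one reused in Section~4 for the generic Torelli theorem, so the two proofs share machinery. Your pencil-trick route, even if completed, would give no such structural information and would leave the precise locus of failure of injectivity opaque.
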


Moreover, we prove the generic Torelli Theorem for the Prym map restricted to the locus of quintic planes curves:

\begin{teo} (Generic Torelli) \label{thm_gen_tor}
The restriction of $\mathcal P_{6,2}$ to the quintic plane locus $\mathcal {RQ}_{6,2}$ is generically injective.
\end{teo}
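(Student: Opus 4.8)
The plan is to upgrade the infinitesimal Torelli theorem to a generic injectivity statement by reconstructing a general triple $(C,\eta,B)$ from its Prym variety. By Theorem~\ref{differenziale iniettivo}, the differential $d\mathcal P_{6,2}$ is injective at a general point of $\mathcal{RQ}_{6,2}$, so the restriction $\mathcal P_{6,2}|_{\mathcal{RQ}_{6,2}}$ is an immersion at such a point and is therefore generically finite onto its image. It thus suffices to prove that the general fibre is a single point, and I would do this by recovering $(C,\eta,B)$, up to isomorphism, from the polarized abelian variety $P=P(C,\eta,B)$.

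First I would pass to the Prym-canonical curve. The cotangent space of $P$ at the origin is canonically $H^0(C,\omega_C\otimes\eta)$, which here is $6$-dimensional, so $P$ determines the ambient $\bP^5=\bP(H^0(C,\omega_C\otimes\eta)^\vee)$ of the Prym-canonical map $\phi\colon C\to\bP^5$ together with the codifferential, namely the multiplication map $\sym^2 H^0(\omega_C\otimes\eta)\to H^0(\omega_C^2(B))$ of \eqref{codifferenziale}. By infinitesimal Torelli this map is surjective, and since $\dim\sym^2 H^0(\omega_C\otimes\eta)=21$ while $\dim H^0(\omega_C^2(B))=17$, its kernel is $4$-dimensional: the curve $\phi(C)\subset\bP^5$ lies on a $4$-dimensional linear system of quadrics. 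At this point I would exploit the special geometry of plane quintics. Since $\omega_C\cong\mathcal O_C(2)$, the ordinary canonical image of $C$ lies on a Veronese surface, whose defining quadrics are the $2\times 2$ minors of a symmetric matrix of linear forms; the twist by $\eta$ deforms this configuration in a controlled way. I expect the rank-degeneracy loci of the recovered quadrics, together with the residual surface they cut out, to encode the plane $\bP^2$ and its line class, and hence the $g^2_5$ exhibiting $C$ as a plane quintic.

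Once $C$ and the embedding $\phi$ are known, $\eta$ and $B$ follow formally: one has $\eta=\phi^*\mathcal O_{\bP^5}(1)\otimes\omega_C^{-1}$, and $B$ is the generically unique effective divisor of $|\eta^{\otimes 2}|$. This would reduce the general fibre to a single point and establish the statement. Throughout, genericity is used to stay away from the special loci isolated in the introduction (hyperelliptic, trigonal, or $\eta$ special), where the differential already fails to be injective.

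The main obstacle is the passage from the abstract polarized abelian variety $P$ to the actual projective curve $\phi(C)$, rather than merely to its infinitesimal invariants. Two approaches seem viable. One is to use the Gauss map of the theta linear system of $P$: its ramification should recover the dual variety of the Prym-canonical curve and hence, by biduality, $\phi(C)$ itself; this is delicate because the polarization is of the non-principal type $\delta$, forcing one to work with a linear system rather than a single theta divisor. The other is to exploit the structural fact, noted in the introduction, that $\mathcal P_{6,2}$ is the restriction of the extended unramified Prym map $\overline{\mathcal P}_g$ to a boundary divisor; since the unramified Prym map is generically injective for $g\ge 7$, one could try to transfer generic injectivity across the boundary, once one checks that the quintic locus is not contained in its non-injectivity locus.
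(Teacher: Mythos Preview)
Your proposal correctly identifies the overall strategy: from the Prym variety one recovers the space $I_2(C_\eta)$ of quadrics through the Prym-canonical curve (via the codifferential), and one must then reconstruct $(C,\eta)$ from this linear system of quadrics. Your dimension count for $I_2(C_\eta)$ is correct. However, the heart of the argument---how to extract $C$ from this $4$-dimensional family of quadrics---is left as an expectation (``I expect the rank-degeneracy loci \ldots to encode the plane $\bP^2$''), and neither of your two suggested routes (Gauss map of the theta system, transfer from the unramified map at $g=7$) is developed. As written, the proposal is a plan, not a proof: the essential step is missing.

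The paper fills this gap by a quite different and very concrete mechanism. Following Lange--Sernesi, the points of $\bP H^0(\omega_C\otimes\eta)^*$ lying on \emph{all} quadrics of $I_2(C_\eta)$ correspond bijectively to classes of extensions $0\to\eta^{-1}\to E\to\omega_C\otimes\eta\to0$ whose coboundary map has rank~$1$. For such an extension one has $h^0(E)=5$, so Lemma~\ref{lemma beauville} provides a sub-line bundle $L\subset E$ with $h^0(L)\ge 2$ and torsion-free quotient. A Clifford/Brill--Noether analysis on the plane quintic (using Lemmas~\ref{eta_two_minus_one}, \ref{one_section} and Proposition~\ref{eta_three_minus_two}) forces $L$ to be either non-special with $L=\omega_C\otimes\eta(-p)$, or one of $\mathcal O_C(1)$, $\mathcal O_C(1)(-p)$, $\mathcal O_C(1)(a_i)$. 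The non-special case gives exactly the points of $C_\eta$; the $\mathcal O_C(1)$ and $\mathcal O_C(1)(a_i)$ cases contribute only points lying in a fixed plane $\langle\phi(D)\rangle$; and the $\mathcal O_C(1)(-p)$ case sweeps out a surface $S$ which is the birational image of a $\bP^1$-bundle over $C$. Thus the base locus of $I_2(C_\eta)$ decomposes as a plane (with finitely many marked points) together with a ruled surface over $C$, and one recovers $C$ as the base of this ruling; $\eta$ is then read off from the projection along the ruling. None of this is visible from ``rank-degeneracy loci of the recovered quadrics'' alone, and it is precisely this extension-theoretic classification that you are missing.
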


Notice that the tetragonal construction (see \cite{do_tetr}) applies in this case: indeed, given $(C,\eta,B)\in \mathcal {RQ}_{6,2}$ we can identify the points of $B$ to get a nodal curve with $3$ natural tetragonal series. Then, there are other coverings in $\overline {\mathcal R}_7$ with the same Prym variety. The previous theorem implies that these other coverings do not belong to  $\mathcal {RQ}_{6,2}$ (as subspace of $\overline {\mathcal R}_7$).

Since the proof of the second theorem is set-theoretical, we can not deduce the first, since $\mathcal {RQ}_{6,2}$ could be contained in some ramification or singular locus in $\mathcal R_{6,2}$.

The structure of the paper is as follows: after some preliminaries on geometric properties of quintic plane curves (section 2), we  devote sections 3 and 4 to prove the two main theorems. In both cases we can convert the statement in some deformation problems: the study of extensions in $Ext^1(\omega_C\otimes \eta,\eta^{-1})$ with coboundary map of ranks $0$ and $1$.

\textbf{Acknowledgments:} We are very grateful to Paola Frediani, Mart\'i Lahoz and Gian Pietro Pirola for stimulating discussions on this subject. We thank Angela Ortega for pointing out an inaccuracy in a first version of the paper.

\section{Preliminaries}

\subsection{Plane quintics and conics}

It is an easy application of Riemann-Roch Theorem that a smooth plane quintic $C$ is neither hyperelliptic nor trigonal. Moreover, any $g^1_4$ on $C$ is obtained  from the $g^2_5$ by subtraction of a point, so the $W^1_4(C)$ is isomorphic to the curve $C$ itself (see \cite{acgh}, page 225). Here we collect some results describing the triplets $[(C,\eta, B=p_1+p_2)] \in \mathcal RQ_{6,2}$. The adjunction formula gives us: $\omega_C=\mathcal O_{\mathbb{P}^2}(-3+5) \restr{C}=\mathcal O_{C}(2)$. Thus, it turns out that conics, and specially conics tangent to $C$ in several points, play a crucial role in the analysis.

Our first result is the following:

\begin{lemma} \label{eta_two_minus_one}
 Let $[(C,\eta, B=p_1+p_2)] \in \mathcal RQ_{6,2}$. Assume that $p_1+p_2$ are general in $C$. Then $h^0(C,\eta \otimes \mathcal O_C(x))=0$ for all $x\in C$. In other words, $\eta $ is not of the form $\mathcal O(y+z-x)$, $x,y,z\in C$.
\end{lemma}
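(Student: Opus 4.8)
The plan is to argue by contradiction and to translate the statement into the geometry of line sections of $C\subset\mathbb P^2$. Since $\deg\eta=1$, the bundle $\eta\otimes\mathcal O_C(x)$ has degree $2$, so $h^0(C,\eta\otimes\mathcal O_C(x))>0$ means exactly that $\eta\otimes\mathcal O_C(x)\cong\mathcal O_C(y+z)$ for some $y,z\in C$, i.e. $\eta=\mathcal O_C(y+z-x)$; this is precisely the reformulation asserted in the statement, so it suffices to rule out such an expression. Assuming $\eta=\mathcal O_C(y+z-x)$ and squaring, the hypothesis $\eta^{\otimes2}=\mathcal O_C(p_1+p_2)$ yields the linear equivalence of degree-$4$ effective divisors $p_1+p_2+2x\sim 2y+2z$, which I will exploit.

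The next step is to show that $|2y+2z|$ is forced to be a $g^1_4$. Because $B=p_1+p_2$ is reduced we have $p_1\neq p_2$, so $p_1+p_2+2x$ does not have even multiplicity at every point and therefore cannot coincide with $2y+2z$ as a divisor; being linearly equivalent yet distinct, the two divisors force $h^0(C,\mathcal O_C(2y+2z))\ge 2$. Invoking the description of quartic pencils on a plane quintic recalled in the preliminaries — namely $W^1_4(C)\cong C$, every $g^1_4$ arising from the $g^2_5$ by subtraction of a point — I can write $\mathcal O_C(2y+2z)\cong\mathcal O_C(1)(-w)$ for a suitable $w\in C$.

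Feeding this back into $p_1+p_2+2x\sim 2y+2z$ gives $p_1+p_2+2x+w\sim\mathcal O_C(1)$, so the degree-$5$ effective divisor $p_1+p_2+2x+w$ lies in the $g^2_5$ and is cut out by a line $\ell\subset\mathbb P^2$. This line passes through $p_1$ and $p_2$, hence $\ell=\overline{p_1p_2}$, and it meets $C$ with multiplicity at least $2$ at $x$; that is, the secant $\overline{p_1p_2}$ is tangent to $C$. For $p_1+p_2$ general, however, $\overline{p_1p_2}$ meets $C$ transversally in five distinct points and is tangent nowhere, which is the desired contradiction and completes the proof.

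The only delicate point I anticipate is making the genericity bookkeeping airtight: one must verify that the bad locus — pairs $(p_1,p_2)$ whose joining line is somewhere tangent to $C$ — is a proper closed subset of $\operatorname{Sym}^2 C$. This is clear, since tangent lines sweep out the $1$-dimensional dual curve $C^\vee$ while secants dominate $(\mathbb P^2)^\ast$ with finite fibres, so the tangency condition is of codimension one; moreover the possible coincidences among $x,y,z,w,p_1,p_2$ are irrelevant, as the argument uses only $p_1\neq p_2$ together with $h^0(C,\mathcal O_C(2y+2z))\ge2$.
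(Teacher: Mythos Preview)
Your argument is correct and follows the same route as the paper: assume $\eta\cong\mathcal O_C(y+z-x)$, square to obtain $2y+2z\sim 2x+p_1+p_2$, observe this is a $g^1_4$, and use $W^1_4(C)\cong C$ to realise both divisors as residuals of line sections through a common fifth point. The only cosmetic difference is the closed condition you invoke at the end: the paper notes that the residual point $q=w$ on the line $\overline{p_1p_2}$ must lie on a bitangent (the line cutting out $2y+2z+q$), whereas you use directly that $\overline{p_1p_2}$ is tangent to $C$ at $x$; either condition is proper and gives the contradiction, and your choice is arguably the more immediate one. Your extra care in justifying $h^0(\mathcal O_C(2y+2z))\ge 2$ via $p_1\neq p_2$ is a detail the paper leaves implicit.
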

\begin{proof}
 By contradiction, assume that $\eta \cong \mathcal O(y+z-x)$. Then $2y+2z\sim 2x+p_1+p_2$ gives a $g^1_4$. So there is a point $q\in C$ and there exist lines $r_1$, $r_2$ such that $r_1\cdot C=2y+2z+q$ and $r_2 \cdot C=2x+p_1+p_2+q$. Hence, the line determined by $p_1$ and $p_2$ (that is, $r_2$) intersects $C$ in a point that belongs to a bitangent of $C$, this contradicts the genericity of the points. 
\end{proof}
When also the curve $C$ is general, we can go further. Indeed, we can state the following:
\begin{prop}\label{eta_three_minus_two} Let $[C,\eta, B]\in \mathcal{RQ}_{6,2}$ be general. Then $\eta $ is not of the form $\mathcal O_C(x_1+x_2+x_3-y_1-y_2)$. 
\end{prop}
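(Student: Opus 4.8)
The plan is to argue by contradiction and reduce the special form of $\eta$ to the existence of a tritangent conic, a condition of positive codimension, and then control the dimension of the resulting locus. Suppose $\eta\cong\mathcal O_C(x_1+x_2+x_3-y_1-y_2)$. Since $\eta^{\otimes 2}\cong\mathcal O_C(B)$ with $B=p_1+p_2$, squaring gives the linear equivalence of degree-$6$ divisors
\begin{equation*}
2x_1+2x_2+2x_3\sim p_1+p_2+2y_1+2y_2.
\end{equation*}
Writing $D=x_1+x_2+x_3$, these are two members of $|\mathcal O_C(2D)|$. If they coincided as divisors, then $p_1+p_2$ would be non-reduced, contradicting that $B$ is reduced; hence they are distinct and $h^0(\mathcal O_C(2D))\ge 2$. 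By Riemann--Roch $h^0(\mathcal O_C(2D))=1+h^0(\omega_C(-2D))$, so $h^0(\omega_C(-2D))\ge 1$. As $\omega_C=\mathcal O_C(2)$, a nonzero section of $\omega_C(-2D)=\mathcal O_C(2)\otimes\mathcal O_C(-2x_1-2x_2-2x_3)$ is exactly a conic tangent to $C$ at each of $x_1,x_2,x_3$. Thus the special form forces $D$ to be the tangency divisor of a tritangent conic.

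Next I would estimate the dimension of the locus $Z\subset\mathcal{RQ}_{6,2}$ of such triples, recalling that $\dim\mathcal{RQ}_{6,2}=12+2=14$: twelve parameters for the plane quintic $C$, and two for the pair $(\eta,B)$, since $B$ moves in a $2$-dimensional family while $\eta$ is one of the finitely many square roots of $\mathcal O_C(B)$. The cleanest bound comes from the Jacobian: the difference map $C^{(3)}\times C^{(2)}\to\Pic^1(C)$ has $5$-dimensional source and $6$-dimensional target, so its image $W$ is a proper subvariety of $\Pic^1(C)$, of dimension $\le 5$. On the other hand, the admissible classes, i.e.\ those $\eta$ with $\eta^{\otimes 2}$ effective, form a $2$-dimensional subvariety $V\subset\Pic^1(C)$ (a finite cover of $W_2(C)$ under $\eta\mapsto\eta^{\otimes 2}$); and for general $C$ the corresponding $B\in|\eta^{\otimes 2}|$ is uniquely determined and reduced. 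The special locus over a fixed $C$ is precisely $V\cap W$, whose expected dimension is $2+5-6=1$. Relativizing over the $12$-dimensional family of plane quintics gives $\dim Z\le 12+1=13<14$, whence a general element of $\mathcal{RQ}_{6,2}$ is not of the special form.

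The hard part is to rule out the degenerate possibility that this intersection jumps to dimension $2$, i.e.\ that some component of $V$ is contained in $W$; equivalently, that a general admissible $\eta$ is automatically of the form $\mathcal O_C(D-E)$. This is exactly where the tritangent reduction of the first paragraph does the work: if a general admissible $\eta$ were special, then by the argument above a tritangent conic would exist for the associated $D$, whereas the conics tritangent to $C$ form a family of dimension only $\le 2$ inside $|\mathcal O_{\mathbb P^2}(2)|\cong\mathbb P^5$ (imposing tangency at a point is one condition, so tangency at three points is codimension $3$), and their tangency divisors sweep a locus $T\subset C^{(3)}$ of dimension $\le 2$, which is a proper subvariety of $C^{(3)}$. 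Thus a general admissible $\eta$ carries no tritangent conic and cannot be special, so no component of $V$ lies in $W$ and $V\cap W$ is a proper closed subset of $V$, of dimension $\le 1$, as required.

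To make the independence of the two conditions fully rigorous I would argue either by a direct refined count—using that a general plane quintic is neither hyperelliptic nor trigonal and that $W^1_4(C)\cong C$, so that within the pencil $g^1_6=|\mathcal O_C(2D)|$ attached to a tritangent triple, demanding a second member of the shape $2y_1+2y_2+p_1+p_2$ is one further, genuinely new condition—or, more safely, by exhibiting a single plane quintic together with an admissible $\eta$ admitting no tritangent conic and then invoking semicontinuity to propagate the expected-dimensional behaviour to the general quintic. Either route reduces the proposition to the dimension inequality $\dim Z\le 13<14$ established above.
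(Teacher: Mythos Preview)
Your first paragraph is correct and shared with the paper: the hypothesis forces $2x_1+2x_2+2x_3\sim 2y_1+2y_2+p_1+p_2$, and Riemann--Roch then produces a conic $Q_1$ tritangent to $C$ at $x_1,x_2,x_3$. The paper goes one step further and also records the second conic $Q_2$, bitangent to $C$ at $y_1,y_2$ and passing through $p_1,p_2$, lying in the same pencil as $Q_1$ (the pencil through the residual four points $z_1,\dots,z_4$). It then specializes $C$ to a union of five general lines and checks directly that no such pencil can exist with $p_1,p_2$ general, which finishes the proof by semicontinuity. So your ``safer'' fallback in the last paragraph is precisely the paper's method; you simply have not carried it out.

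The genuine gap is in your third paragraph. Knowing that $T\subset C^{(3)}$ has dimension $\le 2$ does not exclude $V\subset W$: what you have shown is only $V\cap W\subset\mathrm{Im}\bigl(T\times C^{(2)}\to\Pic^1(C)\bigr)$, and that image can have dimension up to $4$, so there is no obstruction to containing the $2$-dimensional $V$. Concretely, your ``refined count'' does not improve matters: for each $D\in T$ the pencil $|2D|$ has only finitely many members of shape $2y_1+2y_2+p_1+p_2$, but $T$ is already $2$-dimensional, so the resulting family of special admissible $\eta$'s has dimension $\le 2$, the same as $\dim V$, and you cannot conclude $\dim(V\cap W)\le 1$. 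The sentence ``a general admissible $\eta$ carries no tritangent conic'' is the non-sequitur: properness of $T$ in $C^{(3)}$ says nothing about whether every admissible $\eta$ admits a representation $\mathcal O_C(D-E)$ with $D\in T$. To close the argument you must actually exhibit one $(C,\eta,B)$ for which no such configuration exists---exactly the degeneration step the paper performs and you defer. Until that is done, the dimension inequality $\dim Z\le 13$ remains unproved.
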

\begin{proof}
 By contradiction, assume that $\eta \cong \mathcal O_C(x_1+x_2+x_3-y_1-y_2)$. Then:
 \[
 2 x_1 +2 x_2+ 2x_3 \sim 2 y_1 + 2y_2 +p_1 + p_2.
 \]
 Riemann-Roch formula tells us that $h^0(\Omega_C-(2 x_1 +2 x_2+ 2x_3))\geq1$, namely that there exists $z_1,z_2,z_3,z_4$ such that $2 x_1 +2 x_2+ 2x_3+ z_1+z_2+z_3+z_4$ and that $2 y_1 + 2y_2 +p_1 + p_2+z_1+z_2+z_3+z_4$ belong to $\vert \mathcal{O}_C(2)\vert$. This is equivalent to say that the pencil of conics passing through $z_1,z_2,z_3,z_4$ contains both a 3-tangent conic $Q_1$ and a 2-tangent conic $Q_2$ passing through $p_1,p_2$. 
 
 Let $C$ be a degenerate plane quintic given by 5 lines in general position: $C=\bigsqcup_{i=1}^5l_i$. Take a conic $Q_1$ 3-tangent in $x_1,x_2,x_3$. Let $R_4:=Q_1\cdot C-2x_1-x_2-x_3$. Let $\mathcal{Q}$ be the pencil of conics determined by $R_4$. By construction, the four points of $R_4$ will lie in two lines of $C$, let us assume $l_1,l_2$. If there existed a 2-tangent conic $Q_2\in\mathcal{Q}$ then $Q_2$ would be tangent to two of the lines of $Q$, assume $l_3,l_4$. Thus $Q_2\cdot l_5=p_1+p_2$. That would say that the 2 points $p_1,p_2$ would lie in a line of $Q$. By assumption of generality this is not true.
 
 Since the contradiction holds for this specific configuration we claim that the same still holds for the general element $[C,\eta, B]\in \mathcal{RQ}_{6,2}.$
\end{proof}

\subsubsection{Pluritangent conics}

The aim of this section is to prove Lemma \ref{one_section}, which is a technical result with important implications in the rest of the paper. The proof of this lemma is a consequence of rather elementary considerations on conics which we discuss now.

\begin{prop}\label{proposizione coniche}
Assume that $C$ is a general plane quintic, and
 let $\mathcal Q_k\subset \mathbb P^5$ be the closure of the set of conics of rank $\ge 2$ tangent to $C$ at $k$ different points. Then the codimension of $\mathcal Q_k$ in $\mathbb P^5$ is $k$ for $1\le k \le 5$.
\end{prop}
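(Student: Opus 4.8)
The plan is to pass to the canonical picture and then run a dimension count on an incidence variety. Since $\omega_C=\mathcal O_C(2)$ and no conic contains the irreducible quintic $C$, the restriction $H^0(\mathbb P^2,\mathcal O(2))\to H^0(C,\omega_C)$ is an isomorphism (both sides are $6$-dimensional), so conics are exactly the canonical divisors and a conic $Q$ is tangent to $C$ at $x$ precisely when $Q\cdot C$ has a double point at $x$. Hence a conic tangent at distinct points $x_1,\dots,x_k$ corresponds to an effective canonical divisor $2(x_1+\cdots+x_k)+R$ with $R\ge 0$ of degree $10-2k$. I would introduce the incidence variety
\[
\mathcal I_k=\{(Q,\{x_1,\dots,x_k\}):\rk Q\ge 2,\ Q\ \text{tangent to } C \text{ at the distinct } x_i\}\subset \mathbb P^5\times \sym^k C ,
\]
whose projection to $\mathbb P^5$ has image $\mathcal Q_k$, so $\dim\mathcal Q_k\le\dim\mathcal I_k$, with equality once a general member is tangent at exactly $k$ points. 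Fibering $\mathcal I_k$ over $\sym^k C$ instead, the fibre over $\{x_i\}$ is the linear system $|\omega_C-2(x_1+\cdots+x_k)|$, of dimension $h^0(\omega_C-2\sum x_i)-1$, so the whole question reduces to controlling the Brill--Noether type loci on which $h^0(\omega_C-2\sum x_i)$ jumps. I would also record that the rank-$2$ (line-pair) stratum is negligible: a line tangent to $C$ at $a$ points moves in a family of dimension $\le 2-a$, so a pair of lines tangent at $k$ points in total forms a family of dimension $\le 4-k<5-k$, and the smooth conics dominate every $\mathcal Q_k$.

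For $k\le 2$ the count is immediate: for general $x_i$ the divisor $2\sum x_i$ imposes independent conditions on conics, the fibre has dimension $5-2k$, and $\dim\mathcal Q_k=k+(5-2k)=5-k$. The case $k=3$ I would settle by a pencil/ramification argument: fix two of the tangency points $x_1,x_2$; the conics tangent there form $|\omega_C-2x_1-2x_2|$, which for general $x_1,x_2$ is a base-point-free $g^1_6$ (as $h^0=6-4=2$). A member acquires a third tangency exactly at a ramification point of the associated degree-$6$ map $C\to\mathbb P^1$, of which there are finitely many, and positively many, by Riemann--Hurwitz. Thus the fibre of $\mathcal I_3\to\sym^2 C$ is finite, giving $\dim\mathcal Q_3=2$. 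At the other end, $k=5$ forces $Q\cdot C=2(x_1+\cdots+x_5)$, i.e. $x_1+\cdots+x_5$ is an effective theta characteristic; for a general quintic these are the finitely many odd theta characteristics, each with a reduced representative, so $\mathcal Q_5$ is nonempty and finite and $\dim\mathcal Q_5=0$.

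The remaining case $k=4$, and conceptually the passage from $k$ to $k+1$, I would handle with the residual divisor map. On the locus where a conic is tangent at exactly $k$ points with reduced residual, $Q\mapsto R=Q\cdot C-2\sum x_i$ defines a rational map $\rho\colon\mathcal Q_k\dashrightarrow\sym^{10-2k}C$, and a $k$-tangent conic is $(k+1)$-tangent precisely when $R$ meets the big diagonal $\Delta$. Since $\Delta$ is a divisor and, by general position, $\rho$ does not send $\mathcal Q_k$ into $\Delta$, the locus $\mathcal Q_{k+1}=\rho^{-1}(\Delta)$ is either empty or pure of codimension one in $\mathcal Q_k$; together with the nonemptiness propagated from $\mathcal Q_5\subset\mathcal Q_4\subset\cdots$ and the constructions above, this gives $\dim\mathcal Q_{k+1}=\dim\mathcal Q_k-1$ and carries the codimension count through $k=4$.

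The main obstacle is the general-position input that makes the induction run: for the general quintic one must know that the general member of each $\mathcal Q_k$ is tangent at exactly $k$ points (equivalently, that $R$ is reduced) and that no excess component of dimension $>5-k$ arises from special $\{x_i\}$ where $h^0(\omega_C-2\sum x_i)$ jumps. Both amount to a Brill--Noether/general-position statement for the \emph{doubled} divisors $2\sum x_i$ on a general $C$. I expect the cleanest way to secure it is by semicontinuity under degeneration: specialize $C$ to five general lines as in Proposition \ref{eta_three_minus_two}, where the tangent conics and their residuals can be analysed combinatorially, bound there the dimension of the tangent-conic loci by $5-k$, and conclude that for the general quintic the dimension is at most that of the degenerate configuration. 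Matching this upper bound against the lower bounds from the explicit constructions above forces the codimension of $\mathcal Q_k$ to be exactly $k$ for all $1\le k\le 5$.
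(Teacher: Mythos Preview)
Your overall architecture is essentially the paper's: an incidence variety over $\sym^k C$ for small $k$, the pencil/ramification count for $k=3$ (the paper does exactly your ``fix two tangencies, get a degree-$6$ map, count ramification'' argument, computing $22$ ramification points), the residual map $\varphi\colon D\mapsto Q_D\cdot C-2D$ for the passage $k=3\to k=4$, and theta characteristics for $k=5$. So the strategy matches.

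The point where you diverge is precisely the ``main obstacle'' you flag: showing that $\varphi$ (your $\rho$) does not send $\mathcal Q_3$ into the big diagonal $\Delta_4\subset C^{(4)}$. You propose to secure this by degenerating $C$ to five general lines and invoking semicontinuity. The paper does \emph{not} degenerate here. Instead it gives a direct argument on the smooth quintic: it classifies the degenerate tritangent conics (double lines, bitangent$+$tangent, and the ``bitangent case'' where the conic is not determined by the three points), blows up the surface $T$ of tritangent divisors at the finitely many bitangent points, and extends $\varphi$ to the blow-up $\widetilde T$. On an exceptional divisor the extended map sends $(r\cdot s,\,x_1+x_2+x_3)$ with $r$ a bitangent and $s$ any line through $x_3$ to $s\cdot C-x_3$; for a general line $s$ this residual is reduced, so the image of each exceptional curve is not contained in $\Delta_4$. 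That single observation forces $\varphi(\widetilde T)\cap\Delta_4$ to be one-dimensional and finishes $k=4$.

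Compared with your degeneration plan, the paper's route is more self-contained: it avoids having to track how the loci $\mathcal Q_k$ and the rank-$\ge 2$ condition behave in the limit of five lines, and it simultaneously resolves the indeterminacy of the residual map (which you left as a rational map). Your plan could presumably be made to work, but as written it is only a sketch at the crucial step; if you want to complete your version, you should at least resolve $\rho$ on the locus where the tritangent conic is not unique (the bitangent cases) before pulling back $\Delta_4$, since otherwise the ``pure codimension one'' conclusion is not justified.
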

\begin{proof}
First, consider in $\mathbb P^5\times C^{(k)} $ the closed subset $I_k$ defined as the closure of the set of pairs  $(Q,p_1+\ldots +p_k)$ such that all the points $p_i$ are different and $Q$ is a conic tangent to $C$ at all $p_i$, that is $T_C(p_i)\subset T_Q(p_i)$.

Since all the fibers of $I_1 \longrightarrow C$ are projective spaces of dimension $3$ we get that $I_1$ is irreducible of dimension $4$. Since the fibers of $I_1 \longrightarrow \mathbb P^5$ are finite, we get that the image of $I_1$, which is $\mathcal Q_1$, is an irreducible hypersurface.  Notice that this hypersurface contains the Veronese surface of double lines $\mathcal V\subset \mathbb P^5$.

On the other hand, the generic fibers of  $I_2 \longrightarrow C^{(2)}$ are lines. As before, projecting on $\mathbb P^5$, we obtain the codimension 2 subvariety, $\mathcal Q_2$, parametrizing bitangent conics. Notice that also this variety contains $\mathcal V$.

The tritangent conics can be studied similarly but there are some differences with respect to the previous cases: the map into $C^{(3)}$ is no longer surjective and the preimage of the Veronese surface $pr_1^{-1}(\mathcal V)$ gives now a component of the locus we want to study. Define $T'\subset C^{(3)}$ to be the image of $I_3$. Notice that $T'$ contains $pr_2(pr_1^{-1}(\mathcal V))$, which is  the surface $\Gamma_3=\{D\in C^{(3)} \mid h^0(C,\mathcal O_C(1)(-D))>0\}$. Given $a,b\in C$, with different tangent lines, then $T'$ intersects the curve $a+b+C$ in a finite number  of points: the conics tangent to $C$ in $a$ and $b$ is a pencil providing a morphism $C\longrightarrow \mathbb P^1$ of degree $6$ with a ramification divisor of degree $22$. Thus, $T'\cdot (a+b+C)=22$ for all $a+b\in C^{(2)}$ and $T'$ has dimension $2$. Hence, we have a decomposition $T'=T\cup \Gamma_3$. This ends the case $k=3$. To go further in the study of $\mathcal Q_3$ we need the following Lemma, which is obvious.

\begin{lemma}
 Let $Q$ be a conic tangent to $C$ in  three different points $p_1, p_2, p_3$. Then $Q$ is degenerate if and only if at least one of these cases occurs:
 \begin{enumerate}
     \item [a)] the three points are in a line $r$, and $Q$ is the double line $r^2$,
     \item [b)] two of the points, say $p_1, p_2$, have the same tangent line $b$ (a bitangent) and $Q$ is formed by $b$ and the line tangent to $C$ in $p_3$,
     \item [c)] two of the points, say $p_1, p_2$, have the same tangent line $b$ (a bitangent) and $p_3 \in b$; in this case $Q$ is formed by $b$ and any line through $p_3$.
 \end{enumerate}
\end{lemma}

 We say we are in the ``bitangent case'' in the case c) of the lemma (this is the only case where the conic is not completely determined by the divisor $D$ of degree $3$). Moreover, observe that the case a) corresponds to the map $pr_1^{-1}(V)\ra \Gamma_3$ which  has degree $1$.
 In general, the fibers of   $I_3 \ra T'$ are  points, except in a finite number of points, the bitangent cases, where the fiber is isomorphic to $\mathbb P^1$. Let $\widetilde T $ be the blow-up of $T$ at the bitangent points.

Now we define a  map on the open set $U\subset T$ of the non-bitangent points. Denote by $Q_D$ the only conic such that $Q_D\cdot C \ge 2D$ for a $D\in U$. Then we consider the other points of intersection of the $Q_D$ with $C$:
\[
 \varphi: U\ra C^{(4)}, \quad D\mapsto Q_D\cdot C-2D.
\]
Notice that this map could also be defined in the other component
$\Gamma_3$. In this case the image is simply the small diagonal $\Delta_{2,2}=\{2x+2y\mid x,y\in C\}$. Moreover, observe  that the map on $T$ extends to $\widetilde T$: an element $(Q,x_1+x_2+x_3)\in \widetilde T$ is in the exceptional divisor if $Q$ is a couple of lines $r\cdot s$ such that $r$ is a bitangent with $r \cdot C=2x_1+2x_2+x_3$ and $s$ satisfies $s\cdot C=x_3+a_1+a_2+a_3+a_4$. Then $\varphi (Q,x_1+x_2+x_3)=a_1+a_2+a_3+a_4$.

Let us denote by $\Delta_4\subset C^{(4)}$ the big diagonal, that  is, the image of the addition $\Delta \times C^{(2)}\longrightarrow C^{(4)}$, where $\Delta \subset C^{(2)}$ is the usual diagonal. Observe that $\Delta_{2,2} \subset \Delta_4 $.
 
 Since the image of the exceptional divisors are clearly not contained in $\Delta_4$ (but with non-empty intersection with $\Delta_4$) we deduce that $T_4:= \varphi (\widetilde T)\cap \Delta_4$ has dimension $1$. This proves the case $k=4$.
 
 Finally, the case $k=5$  reduces to notice that there is a bijection between $\mathcal{Q}_5$ and the non-trivial 2-torsion points $\alpha \in JC$ with
$h^0(C,\mathcal{O}_C(1)\otimes\alpha) > 0$. Hence finiteness is given by the theory of theta characteristics. \\ \end{proof}

The following holds. 

\begin{lemma}\label{one_section} For a generic element $[(C,\eta, B)]\in \mathcal RQ_{6,2}$, we have that:
 \[
 h^0(C,\mathcal O_C(1)\otimes\eta)=1
 \]
\end{lemma}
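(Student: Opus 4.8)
The plan is to reduce the computation of $h^0(C,\mathcal O_C(1)\otimes\eta)$ to a vanishing statement via Riemann--Roch and Serre duality, and then to control the locus where that vanishing fails by means of Proposition \ref{proposizione coniche}. Since $\deg(\mathcal O_C(1)\otimes\eta)=5+1=6=g$, Riemann--Roch yields $h^0(C,\mathcal O_C(1)\otimes\eta)-h^1(C,\mathcal O_C(1)\otimes\eta)=1$. Using $\omega_C=\mathcal O_C(2)$, Serre duality gives $h^1(C,\mathcal O_C(1)\otimes\eta)=h^0(C,\omega_C\otimes\mathcal O_C(-1)\otimes\eta^{-1})=h^0(C,\mathcal O_C(1)\otimes\eta^{-1})$. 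Hence $h^0(C,\mathcal O_C(1)\otimes\eta)=1+h^0(C,\mathcal O_C(1)\otimes\eta^{-1})$; in particular the section space is always nonzero, so the statement is equivalent to the vanishing $h^0(C,\mathcal O_C(1)\otimes\eta^{-1})=0$ for a generic triple.

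Next I would interpret geometrically the failure of this vanishing. If $h^0(C,\mathcal O_C(1)\otimes\eta^{-1})>0$, then $\mathcal O_C(1)\otimes\eta^{-1}\cong\mathcal O_C(D)$ for an effective divisor $D=x_1+x_2+x_3+x_4$ of degree $4$, equivalently $\eta\cong\mathcal O_C(1)(-D)$. Squaring and using $B\in|\eta^{\otimes 2}|$ together with $\mathcal O_C(2)=\omega_C$, one gets $2D+B\in|\mathcal O_C(2)|$, i.e.\ there is a conic $Q$ with $Q\cdot C=2x_1+2x_2+2x_3+2x_4+p_1+p_2$. As $B=p_1+p_2$ is reduced, $Q$ cannot be a double line, so it has rank $\ge 2$ and is tangent to $C$ at the four points $x_i$ with residual intersection exactly $B$; that is, $Q\in\mathcal Q_4$ and $B$ is the residual divisor of $Q$.

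Finally I would conclude by a dimension count built on Proposition \ref{proposizione coniche}. For generic $C$ that proposition gives $\dim\mathcal Q_4=1$, so the residual map $\rho\colon\mathcal Q_4\to C^{(2)}$, $Q\mapsto Q\cdot C-2D_Q$ (where $D_Q$ is the tangency divisor), has image a proper subvariety of $C^{(2)}$ of dimension at most $1$, whereas $B$ ranges over the $2$-dimensional $C^{(2)}$. Equivalently, the bad triples are dominated by the $13$-dimensional family $\{(C,Q):C\ \text{quintic},\,Q\in\mathcal Q_4(C)\}$ via $(C,Q)\mapsto(C,\mathcal O_C(1)(-D_Q),\rho(Q))$, while $\dim\mathcal{RQ}_{6,2}=12+2=14$. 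Hence the bad locus is proper and a generic element satisfies $h^0(C,\mathcal O_C(1)\otimes\eta^{-1})=0$, giving $h^0(C,\mathcal O_C(1)\otimes\eta)=1$. The real content of the argument lies entirely in Proposition \ref{proposizione coniche} (the bound $\dim\mathcal Q_4=1$); the only remaining care is the bookkeeping of $\dim\mathcal{RQ}_{6,2}$ and the verification that a generic $B$ avoids the image of the residual map.
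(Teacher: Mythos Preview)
Your argument is correct and follows essentially the same route as the paper: reduce via Riemann--Roch/Serre duality to $h^0(C,\mathcal O_C(1)\otimes\eta^{-1})=0$, reinterpret a nonzero section as a $4$-tangent conic of rank $\ge 2$ with residual $B$, and invoke Proposition~\ref{proposizione coniche} to bound the dimension. The only cosmetic difference is that the paper phrases the last step as a direct contradiction (a $2$-dimensional family of $4$-tangent conics versus $\dim\mathcal Q_4=1$), while you spell out the equivalent parameter count in $\mathcal{RQ}_{6,2}$.
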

\begin{proof} By Riemann-Roch Theorem, it is equivalent to show that $h^0(C,\mathcal O_C(1)\otimes\eta^{-1})=0$. Assume, by contradiction, that $h^0(C,\mathcal O_C(1)\otimes\eta^{-1})>0$, namely, that $\forall \eta$ there exist $x_1,x_2,x_3,x_4\in C$ such that $\eta (x_1+x_2+x_3+x_4)\cong  \mathcal O_C(1)$. This is equivalent to require that $\forall p_1,p_2\in C$ there exist $x_1,x_2,x_3,x_4\in C$ such that $ p_1+p_2 +2x_1+2x_2+2x_3+2x_4$ is the intersection of $C$ with a conic $Q$ which will be $4$-tangent to the quintic C. If the rank of $Q$ were $1$ then $p_1=p_2$ which is not possible. Hence, we have a $2$-dimensional family of conics of rank $\ge 2$ which are tangent at $4$ points of $C$. This contradicts Proposition \ref{proposizione coniche}.
\end{proof}

\subsection{Rank 2 vector bundles on curves}

The following result will be very useful along the paper:

\begin{lemma}{Beauville \cite[\S X.7]{beauBook}}\label{lemma beauville}
	Let $C$ be a curve and $E$ a rank 2 vector bundle on it. If \[2h^0(C, E)-3>h^0(C, \det E)\]
	then the kernel of the map $\alpha : \Lambda^2 H^0(C, E)\ra H^0(C, \Lambda^2 E )$ has a decomposable element defining (up to a saturation of the base locus) a line bundle $L\subset E$ such that $h^0(C, L)\geq 2$ and the quotient $E/L$ is a line bundle. 
\end{lemma}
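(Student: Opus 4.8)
The plan is to prove \autoref{lemma beauville} by analyzing the kernel of the natural map $\alpha:\Lambda^2 H^0(C,E)\to H^0(C,\Lambda^2 E)=H^0(C,\det E)$ and showing that the numerical hypothesis forces a decomposable element in $\ker\alpha$. First I would recall the basic dictionary: a \emph{decomposable} element $s\wedge t\in\Lambda^2 H^0(C,E)$ lies in $\ker\alpha$ precisely when $s\wedge t=0$ in $H^0(C,\det E)$, i.e.\ the two sections $s,t\in H^0(C,E)$ are everywhere linearly dependent as values in the fibers of $E$. Two such globally dependent sections span (away from their common vanishing locus) a sub-line-bundle of $E$; taking the saturation of the subsheaf they generate produces a line bundle $L\subset E$ with $s,t\in H^0(C,L)$, hence $h^0(C,L)\ge 2$, and with $E/L$ a line bundle (saturation guarantees the quotient is torsion-free, hence locally free on a smooth curve). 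So the entire content is to produce \emph{one} nonzero decomposable element of $\ker\alpha$.

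Second, I would set up the counting argument that yields such an element. Consider the locus of decomposable bivectors, i.e.\ the affine cone over the Grassmannian $Gr(2,H^0(C,E))$ inside $\Lambda^2 H^0(C,E)$. Writing $h^0(C,E)=n$, this cone has dimension $2n-3$ (the Grassmannian $Gr(2,n)$ has dimension $2(n-2)=2n-4$, plus one for the cone). The map $\alpha$ is linear with image in $H^0(C,\det E)$, whose dimension is $h^0(C,\det E)$. The hypothesis $2n-3>h^0(C,\det E)$ says exactly that the dimension of the decomposable cone strictly exceeds the dimension of the target of $\alpha$. The key geometric step is then to intersect the decomposable cone with $\ker\alpha$: since $\alpha$ restricted to the cone is a morphism to an affine space of dimension $h^0(C,\det E)$, and the source has dimension $2n-3>h^0(C,\det E)$, the generic fiber has positive dimension; in particular the fiber over $0$, namely the decomposable elements of $\ker\alpha$, is nonempty and contains a nonzero element.

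Third, I would make the fiber-dimension step rigorous, since that is the main obstacle: a naive dimension count only bounds the dimension of \emph{one} nonempty fiber from below, and one must ensure it is the fiber over the origin that is hit. The clean way is to work projectively. Let $G\subset\mathbb P(\Lambda^2 H^0(C,E))$ be the (projectivized) Grassmannian of decomposable lines, of dimension $2n-4$, and let $\overline\alpha$ be the rational map to $\mathbb P(\mathrm{Im}\,\alpha)\subset\mathbb P(H^0(C,\det E))$. Since $G$ is irreducible projective of dimension $2n-4$ mapping to a projective space of dimension $\le h^0(C,\det E)-1$, and $2n-4\ge h^0(C,\det E)$ by hypothesis, the map $\overline\alpha$ cannot be defined on all of $G$ (a regular morphism from an irreducible projective variety onto a target of smaller dimension would have positive-dimensional fibers, but more to the point its indeterminacy locus is where decomposable elements map to $0$). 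The indeterminacy locus of $\overline\alpha$ is precisely $\mathbb P(\ker\alpha)\cap G$, and a standard dimension estimate for the base locus of a linear system shows this locus is nonempty whenever $\dim G$ exceeds the dimension of the target linear space. This produces the required decomposable element in $\ker\alpha$.

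Finally, I would package the output: given the decomposable $s\wedge t\in\ker\alpha$, define $L\subset E$ as the saturation of the rank-one subsheaf generated by $s$ and $t$. On the smooth curve $C$ the saturation is a line bundle and the quotient $E/L$ is torsion-free of rank one, hence a line bundle; both $s,t$ factor through $L$ by construction, giving $h^0(C,L)\ge 2$. I expect the genuinely delicate point to be the nonemptiness of the base locus $\mathbb P(\ker\alpha)\cap G$ rather than a mere dimension inequality — one must verify that the linear projection from $\mathbb P(\ker\alpha)$ restricted to the Grassmannian $G$ is not an embedding, which is where the strict inequality $2n-3>h^0(C,\det E)$ is used; everything else is formal.
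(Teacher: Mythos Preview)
The paper does not supply a proof of this lemma; it is stated with attribution to Beauville's book and used as a black box. Your proposal is therefore not competing against any argument in the paper, and it is essentially correct and is the standard proof.

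Two small comments. First, the parenthetical in your third paragraph (``a regular morphism from an irreducible projective variety onto a target of smaller dimension would have positive-dimensional fibers'') is true but irrelevant: nothing prevents such a morphism from being everywhere defined, so this sentence does not by itself force indeterminacy. The real reason, which you do state immediately afterward, is the projective dimension theorem: $\mathbb P(\ker\alpha)$ is a linear subspace of $\mathbb P(\Lambda^2 H^0(C,E))$ of codimension at most $h^0(C,\det E)$, and $G=Gr(2,H^0(C,E))$ is an irreducible projective subvariety of dimension $2n-4\ge h^0(C,\det E)$, hence $G\cap\mathbb P(\ker\alpha)\neq\emptyset$. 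I would simply delete the misleading parenthetical and invoke this theorem directly.

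Second, in the saturation step you should remark explicitly that a nonzero decomposable element $s\wedge t\in\Lambda^2 H^0(C,E)$ corresponds to a $2$-dimensional subspace of $H^0(C,E)$, so $s$ and $t$ are linearly independent as global sections even though they are pointwise dependent; this is what guarantees $h^0(C,L)\ge 2$ rather than just $\ge 1$. You implicitly use this but it is worth one sentence.
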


\section{Infinitesimal Torelli for plane quintics}

The aim of this section is to prove the injectivity of  the differential of the Prym map in a general $[(C,\eta,B)]\in  \mathcal RQ_{6,2}$.

\begin{teo}\label{differenziale iniettivo}
	Let $[(C, \eta, B)]$ be  a  general element in $ \mathcal{RQ}_{6,2}$. Then $d\mathcal P_{g,2}$ is injective at $[(C, \eta, B)]$.
\end{teo}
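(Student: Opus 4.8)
The plan is to prove injectivity of $d\mathcal P_{6,2}$ by analyzing its codifferential, which by \eqref{codifferenziale} is the multiplication map
\[
\mu: \sym^2 H^0(C, \omega_C\otimes \eta) \lra H^0(C, \omega_C^2\otimes \mathcal O(B)).
\]
Injectivity of $d\mathcal P_{6,2}$ is equivalent to surjectivity of this codifferential, so I would aim to show that $\mu$ is surjective for a general $[(C,\eta,B)]\in\mathcal{RQ}_{6,2}$. First I would compute the relevant dimensions. Since $C$ is a plane quintic we have $\omega_C=\mathcal O_C(2)$ and $g=6$, so $\omega_C\otimes\eta$ has degree $8+1=9$ (recall $\eta\in\Pic^1(C)$ with $\eta^{\otimes 2}=\mathcal O(B)$ of degree $2$). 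By \autoref{one_section} and Riemann–Roch, $h^0(C,\omega_C\otimes\eta)=h^0(C,\mathcal O_C(1)\otimes\eta)+\text{(correction)}$; more carefully, $\deg(\omega_C\otimes\eta)=9>2g-2=10$ is false, so one must run Riemann–Roch directly: $h^0(C,\omega_C\otimes\eta)=9-6+1+h^1=4+h^1$, and the lemmas control $h^1=h^0(\eta^{-1})=0$, giving $h^0(C,\omega_C\otimes\eta)=4$. Hence $\dim\sym^2 H^0=\binom{5}{2}=10$, while the target $H^0(C,\omega_C^2\otimes\mathcal O(B))$ has degree $16+2=18$ and genus-$6$ dimension $18-6+1=13$. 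So $\mu$ maps a $10$-dimensional space into a $13$-dimensional one and cannot be surjective on dimension grounds alone — this tells me the correct statement must be about injectivity of $d\mathcal P$ meaning the codifferential has image of the expected corank, or equivalently that $\mu$ has no kernel is the wrong reformulation; I would instead reformulate via the $Ext$-description flagged in the introduction.

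Following the introduction's hint, I would convert the problem into the study of extensions in $\mathrm{Ext}^1(\omega_C\otimes\eta,\eta^{-1})$. The kernel of $d\mathcal P_{6,2}$ corresponds to deformation directions along which the coboundary map built from such an extension has rank $0$. Concretely, a nonzero element of the kernel of $d\mathcal P$ produces a rank-$2$ bundle $E$ sitting in an extension
\[
0\lra \eta^{-1}\lra E\lra \omega_C\otimes\eta\lra 0,
\]
with $\det E=\omega_C$ and with $h^0(C,E)$ large enough to feed \autoref{lemma beauville}. The heart of the argument is to apply Beauville's lemma: I would verify the numerical hypothesis $2h^0(C,E)-3>h^0(C,\det E)=h^0(C,\omega_C)=g=6$, i.e. $h^0(C,E)\ge 5$, which should follow from the two pieces $h^0(\eta^{-1})=0$ and $h^0(\omega_C\otimes\eta)=4$ together with the connecting homomorphism being zero (rank $0$ is exactly the infinitesimal Torelli failure). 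Granting the hypothesis, Beauville yields a line subbundle $L\subset E$ with $h^0(C,L)\ge 2$ and $E/L$ a line bundle, so $L$ is a pencil on the plane quintic.

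The decisive step is then to run through the possibilities for $L$ and derive a contradiction with genericity. Since $C$ is a plane quintic it carries no $g^1_2$ and no $g^1_3$ (it is neither hyperelliptic nor trigonal, as recalled in the Preliminaries), and every $g^1_4$ comes from the $g^2_5$ by subtracting a point, so $W^1_4(C)\cong C$. I would use this rigidity together with $\det E=\omega_C=\mathcal O_C(2)$ to pin down $L$ and its complement $M=E/L=\omega_C\otimes L^{-1}$, translating the condition $h^0(L)\ge 2$ into the existence of a special conic configuration (a pluritangent conic) attached to $(C,\eta,B)$. This is precisely where \autoref{eta_two_minus_one}, \autoref{eta_three_minus_two}, and \autoref{proposizione coniche} enter: each candidate $L$ forces $\eta$ to be of an excluded special form $\mathcal O_C(y+z-x)$ or $\mathcal O_C(x_1+x_2+x_3-y_1-y_2)$, or forces $B=p_1+p_2$ to lie on a pluritangent conic of too-high tangency, contradicting the genericity of $[(C,\eta,B)]$.

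The main obstacle I anticipate is the bookkeeping of the extension data: producing the bundle $E$ with the correct determinant and the correct number of sections from a genuine kernel element of $d\mathcal P$, and ensuring that the coboundary/rank condition translates faithfully into the numerical inequality needed for \autoref{lemma beauville}. A secondary difficulty is the case analysis on $L$: ruling out the ``boundary'' cases where $L$ comes from the $g^2_5$ itself (so that \autoref{one_section}'s computation $h^0(\mathcal O_C(1)\otimes\eta)=1$ must be invoked to bound sections) and the bitangent degenerations catalogued in the proof of \autoref{proposizione coniche}. I expect these to be handled by combining the three preliminary results, but keeping track of which genericity assumption (on $B$, on $\eta$, or on $C$) is consumed at each step will require care.
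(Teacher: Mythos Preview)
Your overall strategy---translate a kernel element of $d\mathcal P_{6,2}$ into a rank-$0$ extension in $\mathrm{Ext}^1(\omega_C\otimes\eta,\eta^{-1})$, apply Beauville's lemma to the resulting rank-$2$ bundle, then run a case analysis on the sub-line bundle $L$---is exactly the paper's approach. However, there are two concrete problems.

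First, your numbers are off. For a plane quintic $\deg\omega_C=\deg\mathcal O_C(2)=10$, not $8$, so $\deg(\omega_C\otimes\eta)=11$ and Riemann--Roch gives $h^0(C,\omega_C\otimes\eta)=6$, not $4$. This matters: with the coboundary zero and $h^0(\eta^{-1})=0$ you get $h^0(F)=6$, and Beauville's hypothesis $2\cdot 6-3=9>6=h^0(\omega_C)$ holds; with your value $4$ you would get $h^0(F)=4$ and $2\cdot 4-3=5\not>6$, so the lemma would not apply. (Likewise $\dim\sym^2=21$ and the target has dimension $17$, so surjectivity of $\mu$ is indeed the right reformulation after all.) The subsequent Clifford-type analysis then pins $(\deg L,h^0 L)$ down to $(5,3),(7,4),(9,5),(11,6)$; only Lemma~\ref{eta_two_minus_one} is needed here (to kill $(9,5)$), not Proposition~\ref{eta_three_minus_two} or the pluritangent-conic count.

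Second, and more seriously, you are missing the endgame. After the case analysis the only survivor is $L=\mathcal O_C(1)$, hence also $M=\omega_C\otimes L^{-1}=\mathcal O_C(1)$, and you still have to derive a contradiction. The paper does this by observing that the vertical extension class lives in $\mathrm{Ext}^1(\mathcal O_C(1),\mathcal O_C(1))\cong H^0(\omega_C)^*$ and that the rank-$0$ coboundary condition forces this class to annihilate all of $H^0(\mathcal O_C(1))\otimes H^0(\mathcal O_C(1))$, hence to be zero; thus $F\cong\mathcal O_C(1)\oplus\mathcal O_C(1)$. But then the inclusion $\eta^{-1}\hookrightarrow F$ composed with a projection gives a nonzero map $\eta^{-1}\to\mathcal O_C(1)$, i.e.\ a section of $\mathcal O_C(1)\otimes\eta$, whose cokernel would be a torsion subsheaf of the locally free $\omega_C\otimes\eta$---contradiction. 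Your proposal instead tries to dispose of \emph{every} $L$ via a forbidden form of $\eta$ or a pluritangent conic on $B$; that simply does not work for $L=\mathcal O_C(1)$, where no such obstruction exists and the splitting argument is essential.
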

	\begin{proof}
		By contradiction, assume there exists $\xi\in H^1(T_C(-p_1-p_2))$ such that
		\[
		\begin{aligned}
		 d\mathcal P_{g,2}: H^1(T_C(-p_1-p_2))&\ra S^2H^1(\eta\meno)=Hom^s(H^1(C,\eta ^{-1})^*, H^1(C,\eta ^{-1})) \\
		 \xi&\mapsto0
		\end{aligned}
		\]
		This means that $\xi\in Ext^1(\mathcal O_C, T_C(-p_1-p_2))$ corresponds to the exact sequence \[\xi: \;\; 0\ra T_C(-p_1-p_2)\ra E\ra \mathcal O_C\ra 0  \] and that 
		\[0\ra\eta\meno\ra E\otimes \omega_C\otimes\eta\ra \omega_C\otimes\eta\ra 0 \]
		has zero coboundary map. Thus, the study of the injectivity of the differential of the Prym map corresponds to the study of extensions $Ext^1(\omega_C\otimes\eta, \eta\meno)$ of rank zero. Putting $F:=E\otimes \omega_C\otimes\eta$, we get \[H^0(C,F)=H^0(C,\omega_C\otimes\eta)=6. \] 
		Furthermore, $\det F=\omega_C. $ Hence, by Lemma \ref{lemma beauville}, the kernel of the map \[\alpha: \Lambda^2H^0(C,F)\ra H^0(C,\Lambda^2 F)=H^0(C,\omega_C)\] contains at least a decomposable element defining a line bundle $L$ such that $h^0(C,L)\geq 2$ and $F/L$ is a line bundle. Line bundles $L\hookrightarrow F$ of such type are parametrized by points lying in the intersection of $\ker (\alpha)$ with $Grass(2, H^0(C,F))$. They fit in the following diagram:
		\begin{equation}\label{diagramma croce}
		\begin{tikzcd}
&	&0	\arrow[d] & &\\
&	&L \arrow{dr}{\tau}\arrow[d] & &\\
0\arrow[r]&\eta\meno\arrow[r]&F\arrow[r]\arrow[d]&\omega_C\otimes\eta\arrow[r]&0\\
& &M	\arrow[d] & &\\
& &0 & & 
		\end{tikzcd}
		\end{equation}
Notice that	the map $\tau$ is necessarily non-zero, therefore we always have that:
\begin{equation}\label{condizione tau }
	h^0(C,\omega_C\otimes \eta\otimes L\meno)>0.
\end{equation}

The vertical exact sequence of diagram \eqref{diagramma croce} gives us \begin{align}\label{condizioni}
			h^0(C,L)+h^0(C,M)&\geq 6\\
			h^0(C,L)-h^1(C,L)&=\deg L -5.\notag
		\end{align}
Since $M=\omega_C\otimes L\meno$, the sum of the two lines of \eqref{condizioni} gives us $2h^0(C,L)\geq \deg L +1.$ 
Moreover, since $C$ is non-hyperelliptic, Clifford's Theorem gives $\deg (L)>2(h^0(C,L)-1)$. Hence $\deg(L)+1\ge 2 h^0(C,L)$.  All together gives $5\le \deg(L)+1=2h^0(C,L)$. Since $h^0(C,L)\le 6$, the only possible values for $(\deg(L),h^0(C,L))$ are  $(5,3)$, $(7,4)$, $(9,5)$ and $(11,6)$. 

Let us analyze all these possibilities:
		\begin{enumerate}
		\item[1.] $ h^0(C,L)=3 $ and $\deg L= 5$. In this case, necessarily $L=\mathcal O_C(1)$. 
		\item[2.] $ h^0(C,L)=4 $ and $\deg L=7$. This is not allowed since  $\omega_C\otimes L\meno$ would be a $g^1_3$.
		\item[3.] $ h^0(C,L)=5 $ and $\deg L=9$. This implies that $L=\mathcal O_C(K_C-x)$, for some $x$ in $C$. As observed in \eqref{condizione tau }, diagram \eqref{diagramma croce} would require $0<h^0(C, \omega_C(\eta)\otimes L\meno)=h^0(\eta +x)$. Therefore, there would exist $y,z\in C$ such that $\eta +x\sim y+z$. This contradicts Lemma \ref{eta_two_minus_one}.
		\item[4.] $ h^0(C,L)=6$ and $\deg L=11$. This is not possible, since diagram \eqref{diagramma croce} requires $h^0(C,\omega_C(\eta)\otimes L\meno)>0$. 
		\end{enumerate} 
		
Hence, we can assume from now on that $L=\mathcal O_C(1)$ and therefore also $M=\mathcal O_C(1)$. By assumption, the coboundary map of the vertical exact sequence $\delta : H^0(C,\mathcal O_C(1))\ra H^1(C,\mathcal O_C(1)) $ is the zero map. The map $\delta$ is given by the cup product with $\varsigma\in Ext^1(\mathcal O_C(1),\mathcal O_C(1))\cong H^1(C,\mathcal O_C)\cong H^0(C,\omega_C)^*. $ We consider
the cup-product map:
\[
H^0(C,\mathcal O_C(1))\otimes H^1(C,\mathcal O_C ) \ra H^1(C,\mathcal O_C(1))
\]	
that induces
\[ 
H^0(C,\mathcal O_C(1)) \otimes H^0(C,\mathcal O_C(1))\otimes H^1(C,\mathcal O_C )\ra \mathbb{C}.
\]	
Hence we obtain that $\varsigma\cdot H^0(C,\omega_C)=0$, namely $\varsigma=0$.
This shows that the vertical exact sequence splits and thus the horizontal exact sequence becomes
\[
0\ra \eta\meno\ra \mathcal O_C(1)\oplus \mathcal O_C(1)\ra \omega_C\otimes 	\eta\ra 0.	
\] 

The non-zero section of $\mathcal O_C(1)\otimes\eta$ determines a skyscraper subsheaf (roughly speaking, the quotient $\mathcal O_C(1)/\eta\meno$) of $\omega_C\otimes 	\eta$. Since this is impossible we conclude the proof.  \end{proof}

\section{Generic Torelli in the locus quintic plane quintics}

In view of the results of the previous section it makes sense to ask whether the  restriction of $\mathcal P_{g,2}$ to $\mathcal RQ_{6,2}$ is generically injective. All this section is devoted to the proof of the generic Torelli Theorem \ref{thm_gen_tor}.

\begin{defin}
The \textit{semicanonical curve} $C_\eta$ is the image of $C$ through the projective map associated with the line bundle $\omega_C\otimes\eta$. 

\end{defin}
We denote with $I_2(C_\eta)$ the space of the homogeneous quadratic  polynomials vanishing on $C_\eta$, namely the kernel of the multiplication map \eqref{codifferenziale} (\cite{green_laz}). Our goal is to recover the curve $C_\eta$ in the intersection of such quadrics. Then $C_\eta$ identifies, up to isomorphism, the pair $(C,\eta)$. Since $C$ is non-hyperelliptic, then $h^0(C,\eta^{\otimes 2})=1$, and the divisor $B$ is completely determined by $C$ and $\eta$. 

We recall that there is a natural bijection between the points of $\mathbb{P}H^0(C,\omega_C\otimes \eta)^*$ lying in the intersection of the quadrics of $I_2(C_\eta)$ and the extensions (up to isomorphism and multiplication by a scalar)
\begin{equation}\label{estensioni}
	0\ra\eta\meno\ra E\ra \omega_C\otimes\eta\ra 0 
\end{equation}
	with coboundary map of rank 1. Moreover, given $p$ in the intersection of the quadrics, the image of the corresponding $\delta: H^0(C,\omega_C\otimes\eta)\ra H^1(C,\eta\meno)$ identifies a 1-dimensional space in $ H^0(C,\omega_C\otimes\eta)^*$ which corresponds exactly to $p\in \mathbb{P}H^0(C,\omega_C\otimes \eta)^* $. This is Lemma 1.2 of \cite{lange_sernesi}. 

So let us classify extensions \eqref{estensioni} with coboundary map of rank 1. This is equivalent to require $h^0(C,E)=5$, hence we have 
\[
2h^0(C,E)-3=7 > h^0(C,\det E)=h^0(C,\omega_C)=6.
\]
Thus, we can apply Lemma \ref{lemma beauville}: $E$ has a sub-line bundle $L$ such that $h^0(C,L)\geq 2$ and $E/L$ is a line bundle. The situation is summarized in a diagram as the one in \eqref{diagramma croce}. In the rest of the paper we consider two different subsets of rank 1 extensions in $Ext^1(\omega_C\otimes\eta, \eta\meno)$:
\[Ext^1_{ns}(\omega_C\otimes\eta, \eta\meno)\quad\text{and}\quad Ext^1_{s}(\omega_C\otimes\eta, \eta\meno). \]
The first consists of rank 1 extensions $E\in Ext^1(\omega_C\otimes\eta, \eta\meno)$  admitting non-special sub-line bundles $L\subset E$, while the second deals with the special cases.

\subsection{L non-special.} In this case, since $h^0(C,M)=h^0(C,\omega_C \otimes L^{-1})=h^1(C,L)$, we get that $h^0(C,L)=h^0(C,E)=5=h^0(C,\omega_C
\otimes \eta)-1 $. Notice that the map $L\ra \omega_C\otimes \eta$ is non-zero, therefore  $L= \omega_C\otimes \eta(-p) $ and so, in cohomology, we obtain the following diagram:
	\begin{equation}
	\begin{tikzcd}[column sep=1.2em]
	&	&0	\arrow[d] &0	\arrow[d] &\\ 
	&	&H^0(C,\omega_C \otimes \eta(-p))\arrow[d]\arrow[equal]{r} &H^0(C,\omega_C\otimes \eta(-p) )\arrow[d] & \\
	0\arrow[r]&H^0(C,\eta\meno)\arrow[r]&H^0(C,E)\arrow[r]\arrow[d]&H^0(C,\omega_C\otimes\eta)\arrow[r]\arrow[d]&H^0(C,\omega_C\otimes\eta)^*\\
	& &H^0(C,\eta\meno(p))\arrow[r]	\arrow[d] &\mathbb{C}_p\arrow[ur] &\\
	& &0. & & 
	\end{tikzcd}
	\end{equation}
	The fact that $ H^0(C,\omega_C\otimes \eta(-p))\ra H^0(C,\omega_C\otimes\eta)^*  $ is the zero map, allows us to consider the map $\mathbb{C}_p\ra H^0(C,\omega_C\otimes\eta)^* $, therefore with obtain the image of $p$ in $C_{\eta}$. 
		\begin{remark}
		Notice that the  vertical extension splits for every $p$. Indeed we have \[
	\begin{aligned}
	&Ext^1(\eta^{-1}(p),\omega_C\otimes \eta(-p))\cong Ext^1(\mathcal O_C,\omega_C\otimes \eta ^{\otimes 2}(-2p))\\ & \cong H^1(C,\omega_C\otimes \eta ^{\otimes 2}(-2p))\cong H^0(C,\mathcal O_C(2p-p_1-p_2))^*=0.
	\end{aligned}
	\]
	\end{remark}
	This remark suggests that, vice versa, the extension   
	\[
	0 \ra \eta ^{-1} \ra \eta^{-1}(p) \oplus \omega_C \otimes \eta (-p) \ra \omega_C \otimes \eta \ra 0. 
	\]
	corresponds exactly to the point $p\in\mathbb{P}H^0(\omega_C\otimes\eta)^*$. Thus, we have the following:
	\begin{prop}
	  The extensions in $Ext^1_{ns}(\omega_C\otimes\eta, \eta\meno)$ trace out the semicanonical curve $C_\eta$ in $\mathbb{P}H^0(\omega_C\otimes\eta)$.
	\end{prop}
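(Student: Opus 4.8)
The plan is to set up an explicit bijection between the points of $C$ and the extensions in $Ext^1_{ns}(\omega_C\otimes\eta,\eta\meno)$, and then to check that the Lange--Sernesi correspondence (Lemma 1.2 of \cite{lange_sernesi}) carries this bijection onto the semicanonical map $\phi\colon C\ra \mathbb P H^0(C,\omega_C\otimes\eta)^*$, whose image is $C_\eta$ by definition. Once this is done the statement follows, since then the points of $\mathbb P H^0(C,\omega_C\otimes\eta)^*$ attached to $Ext^1_{ns}$ are exactly the points $\phi(p)$, $p\in C$.

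I would first run the correspondence backwards. Given $E\in Ext^1_{ns}$, the analysis preceding the statement shows that its Beauville sub-line bundle $L$ (produced by Lemma \ref{lemma beauville}) is non-special with $h^0(C,L)=h^0(C,E)=5$; since the induced map $L\ra\omega_C\otimes\eta$ is non-zero this forces $L=\omega_C\otimes\eta(-p)$ for a uniquely determined $p\in C$, and then $M=\omega_C\otimes L\meno=\eta\meno(p)$. This attaches to each $E$ a well-defined point $p=p(E)\in C$. Conversely, for every $p\in C$ the preceding remark gives $Ext^1(\eta\meno(p),\omega_C\otimes\eta(-p))=0$, so the vertical sequence $0\ra\omega_C\otimes\eta(-p)\ra E\ra\eta\meno(p)\ra 0$ of a diagram as in \eqref{diagramma croce} splits and the middle term is forced to be $E_p:=\eta\meno(p)\oplus\omega_C\otimes\eta(-p)$. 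To see that the associated class in $Ext^1(\omega_C\otimes\eta,\eta\meno)$ is a genuine rank $1$ extension I would compute $h^0(C,E_p)=h^0(C,\eta\meno(p))+h^0(C,\omega_C\otimes\eta(-p))=0+5=5$, the first summand vanishing because for a general $[(C,\eta,B)]$ the bundle $\eta$ is not effective (if $\eta\cong\mathcal O_C(p)$ then $\eta\cong\mathcal O_C(p+x-x)$, contradicting Lemma \ref{eta_two_minus_one}). Hence the coboundary of \eqref{estensioni} has rank exactly $1$, so $E_p\in Ext^1_{ns}$, and the assignments $E\mapsto p(E)$ and $p\mapsto E_p$ are mutually inverse, giving a bijection between $C$ and the classes of $Ext^1_{ns}$ up to scalar and isomorphism.

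It then remains to identify the point of $\mathbb P H^0(C,\omega_C\otimes\eta)^*$ attached to $E_p$. From the cohomology diagram displayed above, the coboundary $\delta\colon H^0(C,\omega_C\otimes\eta)\ra H^1(C,\eta\meno)\cong H^0(C,\omega_C\otimes\eta)^*$ vanishes precisely on the $5$-dimensional image of $H^0(C,\omega_C\otimes\eta(-p))$, i.e.\ on the hyperplane $\ker(\mathrm{ev}_p)$ of sections vanishing at $p$, so $\delta$ factors through the evaluation quotient $H^0(C,\omega_C\otimes\eta)\ra\mathbb C_p$. Because the extension class pairs with $Sym^2 H^0(C,\omega_C\otimes\eta)$ through the codifferential \eqref{codifferenziale}, the map $\delta$ is symmetric; a symmetric map of rank $1$ with kernel $\ker(\mathrm{ev}_p)$ equals $\mathrm{ev}_p\otimes\mathrm{ev}_p$ up to a scalar, so its image is the line spanned by $\mathrm{ev}_p$, which is exactly the point $\phi(p)\in C_\eta$. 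Thus the Lange--Sernesi point of $E_p$ is the image of $p$ in $C_\eta$, and letting $p$ range over $C$ the extensions in $Ext^1_{ns}$ trace out $C_\eta$.

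The step I expect to be the main obstacle is this last identification: showing that the abstract line $\mathrm{im}(\delta)\subset H^0(C,\omega_C\otimes\eta)^*$ coincides with the geometric semicanonical image $\phi(p)$, that is, that the arrow $\mathbb C_p\ra H^0(C,\omega_C\otimes\eta)^*$ appearing in the diagram is, up to scalar, dual to evaluation at $p$. This rests on the symmetry of $\delta$, whereas the bookkeeping of the bijection and the cohomology count $h^0(C,E_p)=5$ are comparatively routine.
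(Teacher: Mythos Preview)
Your proposal is correct and follows essentially the same route as the paper: identify $L=\omega_C\otimes\eta(-p)$, use the vanishing $Ext^1(\eta^{-1}(p),\omega_C\otimes\eta(-p))=0$ to force $E_p=\eta^{-1}(p)\oplus\omega_C\otimes\eta(-p)$, and then read off that the rank~$1$ coboundary singles out the point $\phi(p)$. You add some useful bookkeeping (the check $h^0(E_p)=5$ and the appeal to Lemma~\ref{eta_two_minus_one} to rule out $\eta\cong\mathcal O_C(p)$) that the paper leaves implicit.

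The only genuine difference is in the ``hard step'' you flag. The paper identifies $\mathrm{Im}(\delta)$ with $\phi(p)$ by chasing the cohomology diagram: the arrow $\mathbb C_p\to H^0(C,\omega_C\otimes\eta)^*$ is the connecting map of $0\to\eta^{-1}\to\eta^{-1}(p)\to\mathbb C_p\to 0$, which Serre--dualizes to evaluation at $p$. You instead invoke the symmetry of $\delta$ (coming from the factorization of the pairing through $\omega\cdot\omega'\in H^0(\omega_C^2(B))$) to conclude that a rank~$1$ symmetric map with kernel $\ker(\mathrm{ev}_p)$ has image spanned by $\mathrm{ev}_p$. Both arguments are valid and amount to the same identification; yours is slightly more conceptual, the paper's slightly more diagrammatic. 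Either way the conclusion is the same and no gap remains.
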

	
\subsection{L special.} In this case, using diagram \eqref{diagramma croce} and the Riemann-Roch formula, we get conditions similar to \eqref{condizioni} and we obtain that $2h^0(C,L)-2 \deg(L) \ge 0$. Therefore 
\[
1\leq Cliff(L)=\deg(L)- 2 h^0(C,L)+2 \leq 2.
\]
Summarizing, the line bundle $L$ satisfies the following three conditions:
\begin{enumerate}
    \item [a)] $Cliff(L)=1,2$,
    \item [b)] $2\le h^0(C,L) \le h^0(C,E)=5$,
    \item [c)] $h^0(C,\omega_C\otimes \eta \otimes L^{-1})>0$.
\end{enumerate}

We need some notation to state the next lemma. By Lemma \ref{one_section} we have that $ h^0(C,\mathcal O_C(1)\otimes \eta)=1 $. Let $t$ be the generator and let $D$ be the unique effective divisor in the linear series $\vert \mathcal O_C(1)\otimes \eta \vert $. Put $D=a_1+\ldots +a_6$. We have the following:

\begin{lemma} Let $C$ be a general plane quintic and let $L$ be a line bundle satisfying conditions a), b), c). We have the following possibilities: 
\begin{enumerate}\label{casi possibili}
    \item  If $Cliff(L)=1$, then $L= \mathcal O_C(1)$.
    \item  If $Cliff(L)=2$, then either there exists $p\in C$ such that $L\cong \mathcal O_C(1)(-p)=:L_p$, or $L=\mathcal O_C(1)(a_i)=:M_{a_i}$, $i=1,\ldots ,6$.
 \end{enumerate}
\end{lemma}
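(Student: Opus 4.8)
The plan is to work through the numerical constraints a)--c) systematically, using the classification of line bundles on a general plane quintic together with the genericity results already established in \autoref{eta_two_minus_one} and \autoref{eta_three_minus_two}. Recall that on a smooth plane quintic the Clifford index is $1$, realized (up to the hyperelliptic/trigonal exclusions, which do not occur here) essentially by $\mathcal O_C(1)$ and the $g^1_4$'s obtained from it. So first I would treat the case $Cliff(L)=1$. Writing $d=\deg L$ and $s=h^0(C,L)$, condition a) forces $d=2s-1$, and combining with the bound $2\le s\le 5$ from b) leaves the candidate pairs $(d,s)\in\{(3,2),(5,3),(7,4),(9,5)\}$. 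The pair $(3,2)$ gives a $g^1_3$, impossible since $C$ is not trigonal. The pairs $(7,4)$ and $(9,5)$ I would eliminate via condition c): since $\deg(\omega_C\otimes\eta\otimes L^{-1})=8+0-d$, one checks that $\omega_C\otimes\eta\otimes L^{-1}$ has degree $1$ (for $d=7$) or $-1$ (for $d=9$); a line bundle of negative degree has no sections, and degree $1$ together with c) would force an effective point giving $\eta$ a special form ruled out by \autoref{eta_two_minus_one}. This leaves only $(5,3)$, and a degree-$5$ line bundle with three sections on a plane quintic must be the hyperplane bundle $\mathcal O_C(1)$, proving part (1).

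Next I would handle $Cliff(L)=2$, where a) gives $d=2s$ with $2\le s\le 5$, hence candidates $(d,s)\in\{(4,2),(6,3),(8,4),(10,5)\}$. The pair $(10,5)$ forces $L=\omega_C$, but then $\omega_C\otimes\eta\otimes L^{-1}=\eta$ has no sections (as $\eta$ is a nontrivial line bundle of degree $0$ that is not effective), contradicting c); similarly $(8,4)$ would make $\omega_C\otimes\eta\otimes L^{-1}$ of degree $0$, and c) would force it effective, i.e. trivial, giving $L=\omega_C\otimes\eta$ with $h^0=4<6=h^0(\omega_C\otimes\eta)$, which I expect to contradict the genericity of $\eta$ via \autoref{eta_three_minus_two} or a direct count. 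So the surviving cases are $(4,2)$ and $(6,3)$. For $(6,3)$, $L$ is a degree-$6$ line bundle with $h^0=3$; since $\omega_C=\mathcal O_C(2)$ has degree $10$, condition c) reads $h^0(\omega_C\otimes\eta\otimes L^{-1})>0$ with the twisting bundle of degree $4$. Here I would use that $\mathcal O_C(1)(a_i)$ indeed has $\deg=6$ and $h^0=3$ (adding a base point of $D$ to the hyperplane series), and argue via the structure of $W^2_6(C)$ on a general quintic that these are the only possibilities, yielding the bundles $M_{a_i}$.

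The case $(4,2)$, giving the bundles $L_p=\mathcal O_C(1)(-p)$, is the one I expect to require the most care, so I would treat it last and in detail. A degree-$4$ line bundle with $h^0=2$ is a $g^1_4$, and by the remark in the preliminaries every $g^1_4$ on a plane quintic is of the form $\mathcal O_C(1)(-p)$ for a unique $p\in C$, so $W^1_4(C)\cong C$. This immediately identifies the candidate as $L_p$. I would then verify condition c): $\omega_C\otimes\eta\otimes L_p^{-1}=\mathcal O_C(1)\otimes\eta(p)$ has degree $6$, and by \autoref{one_section} together with the fact that $D=a_1+\dots+a_6$ is the unique effective divisor in $|\mathcal O_C(1)\otimes\eta|$, one sees $h^0(C,\mathcal O_C(1)\otimes\eta(p))\ge h^0(C,\mathcal O_C(1)\otimes\eta)=1>0$, so c) holds for every $p$ and the whole curve's worth of bundles $L_p$ survives, exactly as stated.

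The main obstacle will be the case analysis organizing which numerical pairs actually occur as line bundles on a \emph{general} quintic: the upper bounds on $h^0$ and the Brill--Noether / Clifford constraints pin down the degrees, but ruling out the intermediate pairs $(7,4),(9,5),(8,4),(10,5)$ cleanly requires invoking c) in tandem with \autoref{eta_two_minus_one} and \autoref{eta_three_minus_two} to prevent $\eta$ from acquiring a special form. I would make sure each elimination step cites the precise genericity lemma, and that the two surviving families in part (2) are exhibited as honest line bundles (rather than merely numerically admissible), completing the classification.
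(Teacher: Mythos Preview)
Your proposal has a systematic numerical error that invalidates several of the eliminations. A smooth plane quintic has genus $6$, so $\deg\omega_C=10$, and since $r=2$ we have $\eta\in\Pic^1(C)$, so $\deg\eta=1$. Hence $\deg(\omega_C\otimes\eta\otimes L^{-1})=11-d$, not $8+0-d$. With the correct degrees your arguments for $(d,s)=(7,4)$, $(9,5)$, $(8,4)$, $(10,5)$ collapse: the bundle $\omega_C\otimes\eta\otimes L^{-1}$ has degree $4$, $2$, $3$, $1$ respectively, never negative or zero, so condition c) alone does not give the contradictions you claim. The paper instead eliminates $(7,4)$ by Serre duality (then $\omega_C\otimes L^{-1}$ would be a $g^1_3$, impossible on a quintic), $(9,5)$ by identifying $L=\omega_C(-p)$ and using c) to write $\eta=\mathcal O_C(x_1+x_2-p)$, contradicting Lemma~\ref{eta_two_minus_one}, $(8,4)$ by writing $\omega_C\otimes L^{-1}=\mathcal O_C(y_1+y_2)$ and using c) to force $\eta=\mathcal O_C(x_1+x_2+x_3-y_1-y_2)$, contradicting Proposition~\ref{eta_three_minus_two}, and $(10,5)$ by Riemann--Roch directly ($h^1(L)=0$, so $L$ is non-special --- note in particular that $L\ne\omega_C$, since $h^0(\omega_C)=6$).

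Your treatment of $(6,3)$ also misses the key point. On a plane quintic every $L\in W^2_6(C)$ is of the form $\mathcal O_C(1)(p)$ for some $p\in C$ (dually, $\omega_C\otimes L^{-1}$ is a $g^1_4$), so $W^2_6(C)\cong C$ is one-dimensional, not six points. What singles out $p\in\{a_1,\dots,a_6\}$ is condition c): $h^0(\omega_C\otimes\eta\otimes L^{-1})=h^0(\mathcal O_C(1)\otimes\eta(-p))>0$ together with Lemma~\ref{one_section} forces $p$ to lie in the support of $D$. Your appeal to ``the structure of $W^2_6(C)$'' does not see this restriction, and exhibiting $M_{a_i}$ as examples does not show they are the only ones.
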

\begin{proof}
Assume first that $Cliff(L)=1$, in particular $\deg(L)=2h^0(C,L)-1$. Using condition b), notice that $h^0(C,L)=2$ would give a trigonal series, and that $h^0(C,L)=3$ implies $L\cong \mathcal O_C(1)$. 
 On the other hand, $h^0(C,L)=4$, $\deg L=7$ implies that $\omega_C\otimes L\meno$ is a $g^1_3$. Finally, if $h^0(C,L)=5$ and $\deg L=9$, we have that $L=\omega_C(-p)$, for some $p\in C.$ Then, by the condition c),  $\eta \otimes \mathcal O_C(p)\cong \mathcal O_C( x_1+x_2)$, for some $x_1,x_2\in C$. This contradicts Lemma \ref{eta_two_minus_one}.
 
 Assume now that $Cliff(L)=2 $ and thus $ 2 h^0(C,L)=\deg L$. Again, using condition b), if $h^0(C,L)=2$, then $L$ is a $g^1_4$ and then is of the form $L_p$ as in the statement. Let us consider the cases $(h^0(C,L),\deg (L))=(3,6), (4,8),(5,10)$.

If $\deg L=6$ and $ h^0(C,L)=3$, then $\omega _C\otimes L^{-1}$ gives a $g^1_4$, hence $L=\mathcal O_C(1)(p)=:M_p$ for some $p\in C$. The condition $h^0(C,\omega_C(\eta)\otimes L\meno)=h^0(C,\mathcal O_C(1)\otimes \eta (-p))>0$ forces $p=a_i$.

In the case $h^0(C,L)=4$ and $\deg(L)=8$, we obtain that $\eta$ is of the form $\mathcal O_C(x_1+x_2+x_3-y_1-y_2)$ and this contradicts Proposition \ref{eta_three_minus_two}.

Finally, if $\deg L=10$ and $ h^0(C,L)=5$, then $h^0(C,L)-h^1(C,L)=5-h^1(C,L)=\deg(L)+1-6=5$. So $L$ is non-special.
\end{proof}

Let us study separately the cases in the Lemma \ref{casi possibili}. We will denote the three possibilities as $Ext^1_{s,g^2_5}(\omega_C\otimes\eta,\eta\meno)$, $Ext^1_{s,g^1_4}(\omega_C\otimes\eta,\eta\meno)$ and $Ext^1_{s,g^2_6}(\omega_C\otimes\eta,\eta\meno)$

\vskip 3mm
\noindent	\textbf{
Case $L=\mathcal O_C(1)$.}
\vskip 3mm

By Lemma \ref{one_section} we have $H^0(C,\mathcal O_C(1)\otimes \eta)=\langle t\rangle$. Tensoring the vertical sequence of \eqref{diagramma croce} with $\eta$ we get $H^0(C,\mathcal O_C(1)\otimes \eta)\hookrightarrow H^0(C,E\otimes\eta)$. Thus, using the section $t$, we can construct the following diagram:
 	\begin{equation}\label{diagramma croce g^2_5}
 \begin{tikzcd}
 &	&0	\arrow[d] &0\arrow[d] &\\
 &	&\mathcal O_C(1) \arrow[d]\arrow[equal]{r} &\mathcal O_C(1) \arrow[d]&\\
 0\arrow[r]&\eta\meno\arrow[r]\arrow[equal]{d}&E\arrow[r]\arrow[d]&\omega_C\otimes\eta\arrow[r]\arrow[d]&0\\
0 \arrow[r] &  \eta\meno \arrow[r] & \mathcal O_C(1)	\arrow[d]	\arrow[r] &\mathcal{O}_D \arrow[r] \arrow[d] & 0\\
 & &0 & 0& 
 \end{tikzcd}
 \end{equation}

Therefore, in cohomology we have:
	\begin{equation}\label{diagramma croce g^2_5 in coomologia}
 \begin{tikzcd}
 	&0	\arrow[d] &0	\arrow[d] &\\
 	&H^0(C,\mathcal O_C(1)) \arrow[d]\arrow[equal]{r} &H^0(C,\mathcal O_C(1)) \arrow[d]&\\
 0\arrow[r]&H^0(C,E)\arrow[r]\arrow[d]&H^0(C,\omega_C\otimes\eta)\arrow{r}{\delta}\arrow[d]&H^0(C,\omega_C\otimes\eta)^* \arrow[equal]{d}\\
 0\arrow[r] &H^0(C,\mathcal O_C(1))		\arrow[r] &H^0(C,\mathcal{O}_D)\arrow{r}{f} &   H^0(C,\omega_C\otimes\eta)^* \\
  & & & 
 \end{tikzcd}
 \end{equation}
 
Notice that Im$(\delta)\subset $Im $(f)$.  Since $f$ is simply the evaluation at the points in the support of $D$, we have that $\mathbb P(\text {Im} f)$ is the linear variety generated by the image of the points by the semicanonical map 
$\phi : C\lra \mathbb P H^0(C, \omega_C \otimes \eta )^*$, that is $\langle \phi(a_1),\ldots ,\phi(a_6) \rangle =:\langle \phi(D) \rangle $. Since $H^0(C,\omega _C\otimes \eta (-D))=H^0(C,\mathcal O_C(1))\cong \mathbb C^3$, we have that 
$\langle \phi(D) \rangle$ is a projective plane in $\mathbb P H^0(C,\omega_C\otimes\eta)^*$. Then the point corresponding to the rank $1$ horizontal extension lands in this plane, which does not depend on the extension. Thus we have shown the following:
\begin{prop}\label{immagini estensioni g^5_2}
  The rank 1 special extensions in $Ext^1_{s, g^2_5}(\omega_C\otimes\eta, \eta\meno)$ correspond to points in the plane $\langle\phi(D)\rangle\subset\mathbb{P}H^0(C,\omega_C\otimes\eta)^*$.
\end{prop}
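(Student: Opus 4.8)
The plan is to read the statement off the two diagrams \eqref{diagramma croce g^2_5} and \eqref{diagramma croce g^2_5 in coomologia} that have just been assembled. For an extension in $Ext^1_{s,g^2_5}(\omega_C\otimes\eta,\eta\meno)$ the sub-line bundle supplied by Lemma \ref{lemma beauville} is $L=\mathcal O_C(1)$, and the unique section $t$ of $\mathcal O_C(1)\otimes\eta$ guaranteed by Lemma \ref{one_section} produces the bottom horizontal row $0\ra\eta\meno\ra\mathcal O_C(1)\ra\mathcal O_D\ra 0$, where $D=a_1+\cdots+a_6$ is the vanishing divisor of $t$. By the Lange--Sernesi correspondence the point of $\mathbb P H^0(C,\omega_C\otimes\eta)^*$ attached to the extension is the one determined by the $1$-dimensional subspace $\mathrm{Im}\,\delta\subset H^1(C,\eta\meno)\cong H^0(C,\omega_C\otimes\eta)^*$, where $\delta$ is the coboundary of the horizontal rank $1$ extension; hence it suffices to locate $\mathrm{Im}(\delta)$ inside this last space.

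First I would chase the commutative cohomology diagram \eqref{diagramma croce g^2_5 in coomologia}: since $\delta$ factors through the coboundary $f$ of the bottom row, one obtains the inclusion $\mathrm{Im}(\delta)\subseteq\mathrm{Im}(f)$. Next I would identify $\mathrm{Im}(f)$ geometrically. As $\mathcal O_D$ is the skyscraper sheaf supported on $a_1,\dots,a_6$, Serre duality presents $f$ as the transpose of the restriction map $H^0(C,\omega_C\otimes\eta)\ra\bigoplus_{i}(\omega_C\otimes\eta)\restr{a_i}$, so that $\mathrm{Im}(f)$ is spanned by the evaluation functionals at the points of $D$; these are, up to nonzero scalars, exactly the semicanonical images $\phi(a_1),\dots,\phi(a_6)$. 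Therefore $\mathbb P(\mathrm{Im}\,f)=\langle\phi(D)\rangle$.

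Finally I would pin down the dimension. The sections of $\omega_C\otimes\eta$ vanishing along $D$ form $H^0(C,\omega_C\otimes\eta(-D))=H^0(C,\mathcal O_C(1))\cong\mathbb C^3$, so $D$ imposes exactly three conditions and $\langle\phi(D)\rangle$ is a plane; equivalently $\dim\mathrm{Im}(f)=6-h^0(C,\mathcal O_C(1))=3$. Combined with $\mathrm{Im}(\delta)\subseteq\mathrm{Im}(f)$, this places the point of the extension in $\langle\phi(D)\rangle$, and since the plane depends only on $D$, hence only on $(C,\eta)$, it is the same for every extension in $Ext^1_{s,g^2_5}(\omega_C\otimes\eta,\eta\meno)$. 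The step I expect to be most delicate is the Serre-duality identification of $f$ with evaluation at the support of $D$, together with checking that the resulting functionals really are the $\phi(a_i)$ and span the full plane (rather than a proper subspace); the diagram chase and the dimension count are then routine.
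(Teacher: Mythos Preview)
Your proof is correct and follows essentially the same approach as the paper: chase diagram \eqref{diagramma croce g^2_5 in coomologia} to obtain $\mathrm{Im}(\delta)\subseteq\mathrm{Im}(f)$, identify $f$ as evaluation at the points of $D$ so that $\mathbb P(\mathrm{Im}\,f)=\langle\phi(D)\rangle$, and then compute $H^0(C,\omega_C\otimes\eta(-D))=H^0(C,\mathcal O_C(1))\cong\mathbb C^3$ to see this span is a plane independent of the extension. The paper treats the identification of $f$ with evaluation as immediate rather than invoking Serre duality, but the argument is the same.
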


\begin{remark}
We stress that extensions in $Ext^1_{s, g^2_5}(\omega_C\otimes\eta, \eta\meno)$ do exist. Indeed they come from non-splitting  extensions \[
 0\ra \mathcal O_C(1)\ra E\ra \mathcal O_C(1)\ra 0 
\] 
with coboundary map $\delta: H^0(C,\mathcal O_C(1))\ra H^1(C,\mathcal O_C(1))$ of rank 1. We know that these ones do exist since they are described by the elements or rank 1 in the image of map 
\[
Ext^1(\mathcal O_C(1),\mathcal O_C(1))\xrightarrow{m^*}Hom(H^0(C,\mathcal O_C(1)), H^1(C,\mathcal O_C(1)))
\]
i.e., dualizing to
\[
S^2H^0(C,\mathcal O_C(1))\xrightarrow{m}H^0(C,\omega_C)=H^0(C,\mathcal O_C(2)),
\]
by rank $1$ conics. This shows that they correspond to points of the Veronese surface $\mathcal{V}\subset \mathbb{P}^5$.
\end{remark}

\noindent \textbf{Case $L_p=\mathcal O_C(1)(-p)$, $p\in C$.}
\vskip 3mm

We have that $L_p$ is a $g^1_4$. Let us denote with $M_p=\mathcal O_C(1)(p)$, $p\in C$. We consider extensions  \begin{equation}\label{estensione g^1_4}
\quad	0\ra L_p\ra E\ra M_p\ra 0
\end{equation}
with $h^0(C,E)=5$.
Observe that $h^0(C,L_p)=2$ and $h^0(C,M_p)=3$. Therefore the coboundary map 
\[
\delta: H^0(C,M_p)\ra H^1(C,L_p)\cong H^0(C,M_p)^*
\]
is the zero map.  Let us denote by $Ext^1_0(M_p,L_p)\subset Ext^1(M_p,L_p)$ the set of the rank 0 extensions as in \eqref{estensione g^1_4}. 

\begin{lemma}\label{estensioni verticali rango 0}
Points in $Ext^1_0(M_p,L_p)$ are in bijection with the co-kernel of the multiplication map:
\[
m: H^0(C,M_p)\otimes H^0(C,M_p)\ra H^0(C,\omega_C(2p)).
\]
\end{lemma}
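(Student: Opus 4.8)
The plan is to translate the rank-$0$ condition on the coboundary map into a linear condition on the extension class, and then to recognize the resulting space as dual to $\operatorname{coker}m$.

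First I would identify the ambient space of extensions. Since $L_p\otimes M_p^{-1}=\mathcal O_C(1)(-p)\otimes\mathcal O_C(-1)(-p)=\mathcal O_C(-2p)$, we have $Ext^1(M_p,L_p)\cong H^1(C,\mathcal O_C(-2p))$, and Serre duality gives $Ext^1(M_p,L_p)\cong H^0(C,\omega_C(2p))^*$. I would also record the identification already used in the text, namely $H^1(C,L_p)\cong H^0(C,M_p)^*$, which holds because $\omega_C\otimes L_p^{-1}=\mathcal O_C(2)\otimes\mathcal O_C(-1)(p)=\mathcal O_C(1)(p)=M_p$.

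Next, for a class $e\in Ext^1(M_p,L_p)$ the coboundary map of \eqref{estensione g^1_4} is cup product with $e$, so $\delta_e=(\,\cdot\cup e\,)\colon H^0(C,M_p)\to H^1(C,L_p)\cong H^0(C,M_p)^*$. Viewing $\delta_e$ as a bilinear form $b_e$ on $H^0(C,M_p)$ via $b_e(s,t)=\langle\delta_e(s),t\rangle$, where $\langle\,\cdot\,,\,\cdot\,\rangle$ is the Serre pairing, the assignment $e\mapsto b_e$ becomes a linear map $Ext^1(M_p,L_p)\to\big(H^0(C,M_p)\otimes H^0(C,M_p)\big)^*$.

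The heart of the argument, which I expect to be the only real point, is the compatibility of cup product with multiplication of sections: for $s,t\in H^0(C,M_p)$ one has $b_e(s,t)=\langle e,\,s\cdot t\rangle$, where $s\cdot t=m(s\otimes t)\in H^0(C,M_p^{\otimes2})=H^0(C,\omega_C(2p))$ and the right-hand pairing is the Serre pairing $H^1(C,\mathcal O_C(-2p))\times H^0(C,\omega_C(2p))\to\mathbb C$. This follows from the associativity of cup product together with the multiplicativity of the Serre trace, and it says exactly that $e\mapsto b_e$ is the transpose $m^*$ under the identification $Ext^1(M_p,L_p)=H^0(C,\omega_C(2p))^*$; it is the same mechanism underlying the Lange--Sernesi description recalled above. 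Granting this, the rank-$0$ condition $\delta_e=0$ is equivalent to $b_e=0$, i.e. to $e\in\ker m^*=(\operatorname{im}m)^\perp$. Since $(\operatorname{im}m)^\perp\cong(\operatorname{coker}m)^*$, the subspace $Ext^1_0(M_p,L_p)$ is canonically the annihilator of $\operatorname{im}m$, hence in bijection with $\operatorname{coker}m$, as claimed.
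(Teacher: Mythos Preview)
Your proof is correct and is essentially the same argument as the paper's, only spelled out in more detail: both identify $Ext^1(M_p,L_p)\cong H^0(C,\omega_C(2p))^*$ by Serre duality, use that the coboundary is cup product with the extension class, invoke the compatibility $\langle \delta_e(s),t\rangle=\langle e,\,s\cdot t\rangle$ (the paper writes this as $\xi\omega(\omega')=\xi(\omega\omega')$), and conclude that the rank-$0$ locus is $(\operatorname{im}m)^\perp\cong(\operatorname{coker}m)^*$.
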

\begin{proof} Consider an extension as in \eqref{estensione g^1_4}. Then the rank is $0$ if
taking $\omega \in H^0(C,M_p)$, we have $\xi\omega(\omega')=\xi(\omega\omega')=0$ for every $\omega'\in H^0(C,M_p)$.
This determines (and is determined by) an element in the co-kernel of the map: 
\[
m: H^0(C,M_p)\otimes H^0(C,M_p)\ra H^0(C,\omega_C(2p))\cong H^1(C,\mathcal O_C(-2p))^*\cong Ext^1(M_p,L_p)^*.
\] 
\end{proof}

Notice that in this case the cokernel is always non-empty. Indeed,  $H^0(C,M_p)=H^0(C,\mathcal O_C(1))$ for every $p$, hence the image of the multiplication map has co-dimension $1$ in the target, that is the codimension of $H^0(C,\omega_C)$ in $H^0(C,\omega_C(2p))$. Thus, up to scalars, the non-trivial rank 0 vertical extension in $Ext^1(M_p,L_p)$ is unique and it only depends on the point $p$. This says that the vector bundle $E$ is determined by $L_p\subset E$ for every point $p\in C$. For this reason, from now on we denote it by $E_p$. Furthermore, we can interpret $\mathcal{E}:=Ext^1_0(M_p,L_p)$ as a line bundle over the curve $C$. 

In order to present it more precisely, we discuss Lemma \ref{estensioni verticali rango 0} in families. Let $C\times C\xrightarrow{\pi_i}C$ be the projection on the $i$-th factor, $i=1,2$, and let $\Delta\subset C\times C$ be the diagonal. The sheaves \[\mathcal{F}:= \pi_2^*(\mathcal{O}_C(1))\otimes \mathcal{O}_{C\times C}(\Delta) \quad\text{and}\quad\mathcal{G}:= \pi_2^*(\omega_C)\otimes \mathcal{O}_{C\times C}(-2\Delta)\]
and the co-kernel of the map \[\sym^2 R^0\pi_{1*}(\mathcal{F})\lra R^0\pi_{1*}(\mathcal{G}) \]
yields the definition of $\mathcal{E}$. Notice that, by construction, we have 
\[\mathbb{P}(\mathcal{E})\cong C.\]

Let us fix a point $p\in C$ and let us consider the exact sequence \eqref{estensione g^1_4}. Tensoring with $\eta$, we deduce that
\[
h^0(C,E_p\otimes\eta)\cong h^0(C,M_p\otimes \eta)=2. 
\]

Therefore, for every $E_p$, we have a 2-dimensional family of diagrams parametrized by sections $s\in H^0(C,M_p\otimes \eta)$
as follows (here we put $(s)_0=D_s$): 
\begin{equation}\label{diagramma croce g^1_4}
 \begin{tikzcd}
 &	&0	\arrow[d] & &\\
 &	&L_p \arrow[d]\arrow[equal]{r} &L_p \arrow[d]&\\
 0\arrow[r]&\eta\meno\arrow{r}{\cdot s}\arrow[equal]{d}& E_p\arrow[r]\arrow[d]&\omega_C\otimes\eta\arrow[r]\arrow[d]&0\\
 0\arrow[r]&\eta\meno\arrow{r}{\cdot s} &M_p	\arrow[d]\arrow[r] &\mathcal O_{D_s}\arrow[r] &0\\
 & &0 & & 
 \end{tikzcd}
\end{equation}
 In other words, we have a map:
\begin{align}\label{fibrato rango 2}
    Ext^1_{s,g^1_4}(\omega_C\otimes\eta, \eta\meno)&\ra\mathcal{E}\\
    (s,p)&\mapsto <E_p>\notag
\end{align}
   with fibers given by $H^0(M_p\otimes \eta)$.

Notice that $\mathbb{C}\cong H^0(C,\mathcal O_C(1)\otimes \eta)\subset H^0(M_p\otimes \eta) $. In particular, the divisor $D+p$ belongs to $ \vert M_p\otimes\eta\vert$. Let $s=t\mu_p$, where $\mu_p(p)=0$, be the section such that $(s)_0=D+p$. We have the following:
  
   \begin{prop}\label{curva C nella superficie}
     The image of extension  $(t\mu_p,p)\in Ext^1_{s,g^1_4}(\omega_C\otimes\eta, \eta\meno)$ is the point $p$ in $C_\eta\subset \mathbb{P}H^0(C,\omega_C\otimes \eta)^*$ for every $p\in C$.
   \end{prop}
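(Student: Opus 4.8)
The plan is to identify the point of $\mathbb P H^0(C,\omega_C\otimes\eta)^*$ attached to the rank $1$ extension $(t\mu_p,p)$ with $\phi(p)$, where $\phi\colon C\to \mathbb P H^0(C,\omega_C\otimes\eta)^*$ is the semicanonical map. By the Lange--Sernesi dictionary recalled above, this point is the line $\mathrm{Im}(\delta)\subset H^1(C,\eta^{-1})\cong H^0(C,\omega_C\otimes\eta)^*$, where $\delta\colon H^0(C,\omega_C\otimes\eta)\to H^1(C,\eta^{-1})$ is the coboundary of the middle horizontal row of \eqref{diagramma croce g^1_4}. Since $\delta$ is the relevant value of the (symmetric) Prym codifferential \eqref{codifferenziale} and has rank $1$, its image is determined by its kernel, and $\mathrm{Im}(\delta)=\langle \phi(p)\rangle$ precisely when $\ker(\delta)=H^0(C,\omega_C\otimes\eta(-p))$, the hyperplane of sections vanishing at $p$. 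As $H^0(C,\eta^{-1})=0$, the map $q\colon H^0(C,E_p)\to H^0(C,\omega_C\otimes\eta)$ is injective with image $\ker(\delta)$, so $\dim\ker(\delta)=h^0(C,E_p)=5$; and since the general $\eta$ is non-effective we have $h^1(C,\omega_C\otimes\eta(-p))=h^0(C,\eta^{-1}(p))=0$ for every $p$, whence $h^0(C,\omega_C\otimes\eta(-p))=5$ as well. Thus it suffices to prove the inclusion $\mathrm{Im}(q)\subseteq H^0(C,\omega_C\otimes\eta(-p))$, i.e.\ that every section of $E_p$ maps to a section of $\omega_C\otimes\eta$ vanishing at $p$.

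First I record two identifications coming from the choice $s=t\mu_p$, whose divisor is $D_s=D+p$. Since $\mathcal O_C(D)=\mathcal O_C(1)\otimes\eta$, we get $M_p=\eta^{-1}(D_s)$ and $\omega_C\otimes\eta=L_p(D_s)$, so both the bottom horizontal row and the right vertical column of \eqref{diagramma croce g^1_4} are the standard divisor sequences attached to $D_s$; in particular the two quotient maps onto $\mathcal O_{D_s}$ are the usual principal-part maps. Applying the functoriality of the connecting homomorphism to the morphism of short exact sequences formed by the two lower rows gives $\delta=\delta'\circ\beta$, where $\beta\colon H^0(C,\omega_C\otimes\eta)\to H^0(C,\mathcal O_{D_s})$ is the principal-part map and $\delta'\colon H^0(C,\mathcal O_{D_s})\to H^1(C,\eta^{-1})$ is the coboundary of the bottom row. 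Reading the lower right square of \eqref{diagramma croce g^1_4} in cohomology, the component at $p$ of $\beta(q(\tilde\sigma))$ coincides with the component at $p$ of the principal part of the image $m\in H^0(C,M_p)$ of $\tilde\sigma$ under the vertical projection $H^0(C,E_p)\to H^0(C,M_p)$.

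The crux is the numerical coincidence $h^0(C,M_p)=h^0(C,\mathcal O_C(1)(p))=h^0(C,\mathcal O_C(1))=3$, which holds on a general plane quintic because $L_p=\mathcal O_C(1)(-p)$ is a $g^1_4$. Consequently the inclusion $\mathcal O_C(1)=\eta^{-1}(D)\hookrightarrow\eta^{-1}(D_s)=M_p$ is an isomorphism on global sections, so every $m\in H^0(C,M_p)$ already lies in $\eta^{-1}(D)$ and therefore has no pole at $p$: its principal part at $p$ vanishes. Feeding this into the previous paragraph, the component at $p$ of $\beta(q(\tilde\sigma))$ is $0$ for every $\tilde\sigma\in H^0(C,E_p)$. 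Finally, a local frame of $\omega_C\otimes\eta=L_p(D_s)$ at $p$ identifies, up to a non-zero scalar, the component at $p$ of the principal part of a global section $\sigma$ with its value $\sigma(p)$; hence $q(\tilde\sigma)(p)=0$. This yields $\mathrm{Im}(q)\subseteq H^0(C,\omega_C\otimes\eta(-p))$, and by equality of dimensions $\ker(\delta)=H^0(C,\omega_C\otimes\eta(-p))$, proving the proposition.

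I expect the main obstacle to be the bookkeeping of the last two paragraphs: one must reconcile the two a priori different trivializations of $\mathcal O_{D_s}$ (as cokernel of the $\eta^{-1}$-row and as cokernel of the $L_p$-column), and verify that the residue computation correctly transports the vanishing of the principal part of $m$ at $p$ into the vanishing $\sigma(p)=0$. Once the framings at $p$ are matched, everything else reduces to the dimension count above, powered by the equality $h^0(C,\mathcal O_C(1)(p))=h^0(C,\mathcal O_C(1))$ for the special line bundles $L_p$, $M_p$ on the plane quintic.
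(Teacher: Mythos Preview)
Your proof is correct and arrives at the same destination as the paper's, but by the dual route. The paper picks a test section $tl\in H^0(C,\omega_C\otimes\eta)$ with $l$ a line avoiding $p$, and computes $\delta(tl)$ directly: using that the right-column evaluation map $v$ and the bottom-row connecting map are Serre-dual to each other, it writes $\delta=v^*\circ v$; since $t$ vanishes on $D$, one gets $v(tl)=(0,\dots,0,tl(p))$, so $\delta(tl)$ is, up to scalar, the evaluation-at-$p$ functional $\phi(p)$. You instead characterize $\ker(\delta)$: exploiting the commutativity of the lower-right square together with the equality $h^0(C,M_p)=h^0(C,\mathcal O_C(1))$, you show that every section of $E_p$ maps to a section of $\omega_C\otimes\eta$ vanishing at $p$, and then use the symmetry of $\delta$ and the dimension count to conclude $\mathrm{Im}(\delta)=\langle\phi(p)\rangle$. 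Your approach sidesteps the Serre-dual identification $\delta=v^*\circ v$; the paper's sidesteps your appeal to the symmetry of $\delta$ by exhibiting the image directly. Both rest on the same underlying numerical fact (every global section of $M_p$ already comes from $\mathcal O_C(1)$, or equivalently $t$ vanishes along $D$). One small remark: the factorization $\delta=\delta'\circ\beta$ that you record is not actually used in your argument---the lower-right square alone does the work.
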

   \begin{proof}
        Let us consider the element $tl\in H^0(\omega_C\otimes\eta)$: $l$ is a line in $ H^0(C,\mathcal{O}_C(1))$ not passing through $p$. Taking diagram \eqref{diagramma croce g^1_4} in cohomology, one easily checks that the vertical sequence 
\[
0\ra H^0(C,L_p)\ra H^0(C,\omega_C\otimes\eta) \xrightarrow{v}H^0(C,\mathcal O_{D_s})\ra H^1(C,L_p)\ra 0,
\]
where $v$ is the evaluation map $\alpha\mapsto (\alpha(a_1), \dots, \alpha(a_6),\alpha(p)), $ dualizes to the horizontal sequence 
\[
\begin{aligned}
0\ra H^0(C,M_p)\ra H^0(C,\mathcal O_{D_{s}})&\xrightarrow{v^*} H^1(C,\eta\meno)\cong \\
& \cong H^0(C,\omega_C\otimes\eta)^*\ra   H^1(C,M_p)=H^0(C,L_p)^*\ra 0.
\end{aligned}
 \]
Since we know that the coboundary map $\delta : H^0(C, \omega_C\otimes \eta)\lra H^0(C, \omega_C \otimes \eta)^*$ has rank 1, once we show that $\delta(tl) \neq 0$, we are almost done. This follows immediately from the fact that 
\[
\delta(tl)=v^*v(tl)=v^*(0,\dots, 0,tl(p)).
\]
Up to a scalar, this yields the evaluation map in $p$, i.e. the point $p\in C_\eta$. \\
   \end{proof}
   \begin{remark}
   Notice that the previous proposition also clarifies how rank 1 extensions in $Ext^1_{s,g^1_4}(C,\omega_C\otimes\eta, \eta\meno)$, with $s\neq t\mu_p$, contribute to the intersection of the quadrics in $\mathbb{P}I_2(C_\eta)$. Indeed, it shows that changing the section $t\mu_p$ to a section $s\in H^0(C,\mathcal{O}_C(1)(p)\otimes \eta)$ in a neighborhood of it, by continuity, $\delta(tl)$ is again different from zero. This says that to study the image of the map 
\[\mathbb{P}H^0(C,\omega_C\otimes\eta)\otimes \mathbb{P}Ext^1_{s,g^1_4}(\omega_C\otimes\eta, \eta\meno)\ra \mathbb{P}H^0((C,\omega_C\otimes\eta)^*)\]
(which gives us all the points in $\mathbb{P}H^0(C,\omega_C\otimes\eta)^*$ coming from $\mathbb{P}Ext^1_{s,g^1_4}(\omega_C\otimes\eta, \eta\meno)$) 
it is sufficient to look at the image of 
\begin{equation}\label{mappa moltiplicazione finale}
   \phi: \{tl\}\otimes \mathbb{P}Ext^1_{s,g^1_4}(\omega_C\otimes\eta, \eta\meno)\ra \mathbb{P}H^0((C,\omega_C\otimes\eta)^*).
\end{equation}
  By \eqref{fibrato rango 2}, $S_1:=\mathbb{P}Ext^1_{s,g^1_4}(\omega_C\otimes\eta, \eta\meno)$ is a $\mathbb{P}^1$-bundle over $C$ with fibres $\mathbb{P}H^0(C,\mathcal{O}_C(1)(p)\otimes \eta)$. Thus, the problem finally turns upon the study of the surface $S$ which is  $\phi (S_1)$.
 \end{remark}
 
The following general fact holds:
\begin{prop}\label{two_surfaces}
Let $f:S_1\ra C$ be a $\mathbb{P}^1$-bundle over a base curve $C$ with section $\sigma:C \lra S_1$, we put $C_1:=\sigma(C)$. Let $\tau: S_1\ra \mathbb{P}^N$ be a morphism and set $S_2:=\tau(S_1)$ the image. Assume that $\dim S_2=2$, $\tau(\mathbb{P}^1)$ is a line and $\tau_{\vert C_1}$ is an embedding. Then $\tau$ is birational and  there is at most one point $q$ such that $S_1\setminus \tau \meno (\{ q\})$ is isomorphic to $S_2\setminus \{q\} $.
\end{prop}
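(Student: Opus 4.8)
The plan is to prove the two assertions — birationality of $\tau$ and the near-isomorphism onto the image — separately, exploiting the rigidity of a $\mathbb{P}^1$-bundle equipped with a section.

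First I would establish birationality. Since $\dim S_2=2$ and $\dim S_1=2$, the morphism $\tau$ is generically finite onto its image, so it has a well-defined degree $d\ge 1$. The hypothesis that $\tau(\mathbb{P}^1)$ is a line for every fibre $\mathbb{P}^1$ of $f$ says that $\tau$ carries the ruling of $S_1$ to a family of lines sweeping out $S_2$; in particular the restriction of $\tau$ to a general fibre $F\cong \mathbb{P}^1$ maps $F$ isomorphically onto a line $\ell\subset S_2$ (a finite morphism $\mathbb{P}^1\to\ell\cong\mathbb{P}^1$ of degree $>1$ cannot be injective on the section point while the section is an embedding along $C_1$). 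Concretely, the section curve $C_1=\sigma(C)$ meets each fibre $F$ in one point, and $\tau\restr{C_1}$ is an embedding by hypothesis, so the images $\tau(C_1)$ and the line $\tau(F)$ meet in exactly one point. This shows that a general point of $S_2$ lying on the image line $\tau(F)$ has a single preimage, forcing $d=1$; hence $\tau$ is birational.

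Next I would control the locus where $\tau$ fails to be an isomorphism. Because $\tau$ is birational and $S_1$ is a smooth (in fact ruled) surface, the non-isomorphism locus is where distinct points or positive-dimensional fibres get contracted or identified. Two distinct points of $S_1$ with the same image either lie on the same fibre $F$ or on different fibres. The first is excluded since $\tau\restr{F}$ is an isomorphism onto a line. For the second, suppose fibres $F$ and $F'$ both map to lines through a common image point $q\in S_2$; since the section $C_1$ meets $F$ and $F'$ in distinct points whose images under the embedding $\tau\restr{C_1}$ are distinct, the common point $q$ cannot be $\tau(C_1\cap F)$ for more than one fibre. The key numerical observation is that two distinct lines in a ruled surface image through the same base point impose strong constraints: I would argue that all such identifications are concentrated over a single point $q$, using that the lines $\tau(F)$ form a one-parameter family (parametrized by $C$) of lines in $\mathbb{P}^N$, and a one-parameter family of lines can have at most one common point (otherwise they would all pass through a fixed point, contradicting that $\tau\restr{C_1}$ traces out a curve transverse to the ruling).

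The main obstacle I anticipate is the last step: ruling out that the lines $\tau(F)$ share a base point along a positive-dimensional locus, and pinning the identification down to a single point $q$. The clean way is to consider the rational map $C\dashrightarrow \mathbb{G}(1,N)$ sending $p$ to the line $\tau(f\meno(p))$, together with the marked point $\tau(\sigma(p))$ on that line; since $\tau\restr{C_1}$ is an embedding, the marked points vary, so the family of lines is non-constant. A non-constant family of lines in $\mathbb{P}^N$ can contain at most one point common to all members (if two distinct points were common to all, the lines would be constant), which yields the candidate $q$. Away from the fibres through $q$, distinct fibres map to distinct lines meeting in at most finitely many points, and combined with the embedding of $C_1$ one checks no two points outside $\tau\meno(q)$ are identified and no fibre is contracted; hence $\tau$ restricts to an isomorphism $S_1\setminus\tau\meno(\{q\})\xrightarrow{\sim}S_2\setminus\{q\}$, which is exactly the assertion.
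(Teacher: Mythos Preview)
Your proposal has two genuine gaps. First, the argument that $\tau|_F$ has degree $1$ does not work: a map is trivially injective on a single point, so the parenthetical justification is empty, and the claim that $\tau(C_1)\cap\tau(F)$ is a single point is unjustified (only ``at least one'' follows from $C_1\cdot F=1$); even granting it, this says nothing about the degree of $\tau$, since the preimages of a general point on $\tau(F)$ need not lie on $C_1$ at all. Second, and more importantly, the step from ``the family of lines has at most one point $q$ common to all members'' to ``$\tau$ is an isomorphism off $\tau^{-1}(q)$'' is a non sequitur. Two image lines $\tau(F)$ and $\tau(F')$ may very well meet at a point $p$ not lying on every line of the family, and this already produces an identification over $p$; your closing ``one checks no two points outside $\tau^{-1}(q)$ are identified'' is precisely the content to be proved.

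The paper argues differently, and the missing idea is there. It fixes a fibre $r_1$ and studies the curve $\tau^{-1}(\tau(r_1))\subset S_1$. On a $\mathbb P^1$-bundle every irreducible curve is either a fibre or dominates $C$; the paper rules out an extra component $\Gamma$ of degree $\ge 2$ over $C$ because $\Gamma$ would meet every other fibre $l$ in at least two points, all mapping into the line $\tau(r_1)$, so $\tau(l)$ would meet $\tau(r_1)$ in $\ge 2$ points and hence coincide with it for every $l$, contradicting $\dim S_2=2$. Thus the residual part of $\tau^{-1}(\tau(r_1))$ is at worst a section $\beta$. If $\beta$ is contracted to a point $q$ one obtains a cone with vertex $q$; if not, $\tau(\beta)=\tau(r_1)$ is a line and the paper derives a contradiction with the embedding of $C_1$. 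This vertical/horizontal decomposition of $\tau^{-1}(\tau(r_1))$ is what simultaneously yields birationality and pins the exceptional locus to a single contracted section, and it is the mechanism your outline does not reach.
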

\begin{proof}
Let $r_1$ be a line in $S_1$. First notice that $\tau\meno(\tau(r_1))$ cannot contain more than one vertical line (namely $r_1$), neither $\lambda r_1$, $\lambda\in \mathbb{C}$. This is because in these cases, the assumption on $\tau_{\vert C_1}$ would be violated. Therefore, assume now, by contradiction, that $\tau\meno(\tau(r_1))$ contains $r_1$ plus a curve $\Gamma$ not containing more lines and such that the map to $C$ has degree $\ge 2$. This is impossible since the points in the intersection $l\cap\ \Gamma$, with $l$ any other vertical line on $S_1$, must be mapped to $\tau(r_1)$. Thus, the image $\tau(l)$ can no longer be a line. Therefore, $\tau\meno(\tau(r_1))$ at worst is given by $r_1$ plus a section $\beta$. If this section is contracted to a point in $\tau(r_1)$ then we would obtain a cone and out of the vertex we are fine. Since lines transform into lines  there is at worst one point where this can happen, therefore we are done. On the other hand, if the section is not contracted to a point, then $\tau(r_1)$ would be birational to $\tau(C_1)$ and this is impossible.
\end{proof}
\begin{cor}
The morphism $\phi$ satisfies the hypothesis of Proposition \ref{two_surfaces}.
\end{cor}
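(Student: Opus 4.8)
The plan is to verify the four hypotheses of Proposition \ref{two_surfaces} for $\tau=\phi$ and target $\mathbb{P}H^0(C,\omega_C\otimes\eta)^*$. The $\mathbb{P}^1$-bundle structure is already available: by \eqref{fibrato rango 2}, $S_1=\mathbb{P}Ext^1_{s,g^1_4}(\omega_C\otimes\eta,\eta\meno)$ is a $\mathbb{P}^1$-bundle $f\colon S_1\ra C$ with fibre $\mathbb{P}H^0(C,M_p\otimes\eta)$ over $p$. The inclusion $H^0(C,\mathcal O_C(1)\otimes\eta)=\langle t\rangle\hookrightarrow H^0(C,M_p\otimes\eta)$, $t\mapsto t\mu_p$, is a sub-line-bundle of the rank-two bundle defining $S_1$, hence yields a section $\sigma\colon C\ra S_1$, $\sigma(p)=[t\mu_p]$; I put $C_1=\sigma(C)$.

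First I would check that $\phi$ is a morphism. For every $[s]\in S_1$ the associated horizontal extension has middle term $E_p$, and since $h^0(C,E_p)=5$ and $h^0(C,\eta\meno)=0$ the coboundary $\delta_s\colon H^0(C,\omega_C\otimes\eta)\ra H^1(C,\eta\meno)$ has $5$-dimensional kernel and therefore rank exactly $1$. Thus $\mathrm{im}(\delta_s)$ is a well-defined point of $\mathbb{P}H^0(C,\omega_C\otimes\eta)^*$ for every $[s]$, and $\phi([s])=[\delta_s(tl)]=\mathrm{im}(\delta_s)$ wherever $\delta_s(tl)\neq 0$; in particular $\phi$ is defined on all of $S_1$.

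Next comes the hypothesis that $\phi$ sends fibres of $f$ to lines, which I expect to be the main obstacle. The map $\delta_s$ is given by cup product with the extension class $e_s\in Ext^1(\omega_C\otimes\eta,\eta\meno)$, and this product is bilinear; fixing $tl$, the map $s\mapsto \delta_s(tl)$ is therefore linear in $e_s$, so it suffices to show that $s\mapsto e_s$ is linear on the fibre $H^0(C,M_p\otimes\eta)$. For general $s$ the section of $E_p\otimes\eta$ is nowhere vanishing, so $s$ realises $\eta\meno$ as a sub-bundle of the fixed bundle $E_p$ with quotient $\omega_C\otimes\eta$; a universal-extension argument over $C\times\mathbb{P}H^0(C,M_p\otimes\eta)$ (the tautological class being twisted by $\mathcal O(1)$ in the fibre direction) then gives the linearity of $s\mapsto e_s$. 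Consequently $\phi\restr{f\meno(p)}$ is projective-linear; it is moreover injective, because distinct extensions correspond to distinct points of the intersection of the quadrics by Lemma 1.2 of \cite{lange_sernesi}, so the linear map $s\mapsto\delta_s(tl)$ has rank two. Hence $\phi$ maps each fibre isomorphically onto a line $\ell_p$.

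It remains to treat the last two hypotheses. By Proposition \ref{curva C nella superficie} we have $\phi(\sigma(p))=p\in C_\eta$, so $\phi\restr{C_1}$ is the semicanonical map, and it is an embedding because $\omega_C\otimes\eta$ is very ample: for a length-two subscheme $Z$, Riemann--Roch gives $h^0(C,\omega_C\otimes\eta(-Z))=4+h^0(C,\eta\meno(Z))$, and $h^0(C,\eta\meno(Z))=0$ for all $Z$ is exactly the statement that $\eta$ is not of the form $\mathcal O_C(x+y-z)$, which is excluded by Lemma \ref{eta_two_minus_one}. Finally, for $\dim S_2=2$ I would argue that $\ell_p$ meets $C_\eta$ at $p$ but is not contained in it (a line cannot lie on the non-linear curve $C_\eta$), and that $\ell_p=\ell_{p'}$ can hold only for the finitely many $p$ in $\ell\cap C_\eta$; thus the lines $\ell_p$ form a genuinely moving one-parameter family and sweep out a surface. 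This verifies all the hypotheses of Proposition \ref{two_surfaces}.
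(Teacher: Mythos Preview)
Your argument follows the same route as the paper's, only with considerably more detail where the paper is terse. The paper simply records that $S_1$ is a $\mathbb P^1$-bundle with the section $p\mapsto [t\mu_p]$, invokes Proposition~\ref{curva C nella superficie} for the image of that section, and then disposes of the remaining two hypotheses in one line: ``since $\phi$ is a multiplication map, the vertical lines of the bundle are sent to lines in $S$ and furthermore $\dim\operatorname{Im}(\phi)=2$.'' You unpack this: you make explicit that $\phi([s])=[e_s\cup tl]$ is linear once $s\mapsto e_s$ is, you supply the very-ampleness check for $\omega_C\otimes\eta$ via Lemma~\ref{eta_two_minus_one} to get that $\phi\restr{C_1}$ is an embedding, and you give a genuine reason for $\dim S_2=2$. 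These additions are welcome; the paper leaves them implicit.

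One point worth tightening: your injectivity of $\phi$ on a fibre via Lange--Sernesi is not quite a one-liner. That lemma gives a bijection between rank-$1$ extension \emph{classes} and points of the base locus, so what you really need is that $[s]\mapsto [e_s]$ is injective from $\mathbb P H^0(C,M_p\otimes\eta)$ to $\mathbb P\,Ext^1(\omega_C\otimes\eta,\eta^{-1})$. This amounts to $E_p$ being simple (so that two linearly independent sections of $E_p\otimes\eta$ cannot yield isomorphic extensions), which holds because $E_p$ is the unique non-split extension of $M_p$ by $L_p$ and $C$ is not hyperelliptic; it would be cleaner to say this directly rather than to appeal to \cite{lange_sernesi}.
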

\begin{proof}
 We already know that $\mathbb{P}Ext^1_{s,g^1_4}(\omega_C\otimes\eta, \eta\meno)$ is a $\mathbb{P}^1$-bundle over $C$. Moreover, let us observe that the bundle $Ext^1_{s,g^1_4}(\omega_C\otimes\eta, \eta\meno)$ naturally carries a section which is isomorphic to the curve $C$. It is obtained taking the point $\{t\mu_p\}\in \mathbb{P}H^0(\mathcal{O}_C(1)(p)\otimes \eta)$ for every $p\in C$. By Proposition \ref{curva C nella superficie}, this section is clearly sent by $\phi$ to the curve $C_\eta\subset S$. Finally, since $\phi$ is a multiplication map, the vertical line of the bundle are sent to line in $S$ and furthermore $\dim \text{Im}(\phi)=2$. 
\end{proof}
Thus, we can conclude with the following:
\begin{prop}
The rank 1 special extensions in $Ext^1_{s,g^1_4}(\omega_C\otimes\eta, \eta\meno)$ correspond to a surface $S\subset \mathbb{P}H^0(\omega_C\otimes \eta)^*$ birational to a ruled surface with base curve $C$.
\end{prop}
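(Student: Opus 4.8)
The plan is simply to assemble the two results immediately preceding the statement. By the Corollary just proved, the multiplication map $\phi$ of \eqref{mappa moltiplicazione finale} satisfies all the hypotheses of Proposition \ref{two_surfaces}: its source $S_1=\mathbb{P}Ext^1_{s,g^1_4}(\omega_C\otimes\eta, \eta\meno)$ is a $\mathbb{P}^1$-bundle over $C$ by \eqref{fibrato rango 2}; it carries the distinguished section $\{t\mu_p\mid p\in C\}\cong C$ which, by Proposition \ref{curva C nella superficie}, is sent isomorphically onto the semicanonical curve $C_\eta$; the vertical $\mathbb{P}^1$'s are sent to lines; and the image $S=\phi(S_1)$ is a surface. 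The first step is therefore to invoke Proposition \ref{two_surfaces} directly, which yields that $\phi\colon S_1\to S$ is birational, and in fact an isomorphism away from at most one point and its preimage.

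The second step is to read off the ruled structure. Since $S_1$ is a $\mathbb{P}^1$-bundle over the base curve $C$, it is a geometrically ruled surface with base $C$; as $\phi$ is birational, $S$ is birational to $S_1$ and hence to a ruled surface over $C$, which is exactly the assertion. I stress that ``ruled with base $C$'' is meant here only up to birational equivalence, so there is no need to exhibit $S$ itself as a $\mathbb{P}^1$-bundle: birationality to $S_1$ suffices. This is the one point where a word of care is required.

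Finally, to justify that the extensions literally ``correspond to'' $S$, I would recall the remark following Proposition \ref{curva C nella superficie}: every point of $\mathbb{P}H^0(C,\omega_C\otimes\eta)^*$ produced by an extension in $\mathbb{P}Ext^1_{s,g^1_4}(\omega_C\otimes\eta,\eta\meno)$ already lies in the image of $\phi$, so the locus traced out by these extensions is precisely $S$. There is essentially no remaining obstacle: all the geometric input (the bundle structure, the behaviour of $\phi$ on the fibres and on the distinguished section, and the two-dimensionality of the image) has been verified in the Corollary, and Proposition \ref{two_surfaces} packages these into the desired birationality statement.
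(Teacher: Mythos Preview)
Your proposal is correct and follows exactly the route the paper intends: the paper gives no explicit proof of this Proposition, writing only ``Thus, we can conclude with the following,'' so the argument is precisely the assembly you describe --- the Corollary verifies the hypotheses of Proposition~\ref{two_surfaces}, which then yields that $\phi\colon S_1\to S$ is birational, and since $S_1$ is a $\mathbb{P}^1$-bundle over $C$ the conclusion follows. Your additional remark recalling why the locus traced out by the extensions is exactly $S$ is a welcome clarification of what the paper leaves implicit.
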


\vskip 3mm
\noindent \textbf{Case $L=\mathcal{O}_C(1)(a_i)=:M_{a_i}, i=1,\dots,6.$} 
\vskip 3mm

We consider extensions \[0\ra M_{a_i}\ra E\ra \omega_C \otimes M_{a_i}\meno\ra 0\]
with $h^0(E)=5$ and null coboundary map. An easy reformulation of Lemma \ref{estensioni verticali rango 0} shows that such extensions are in bijection with the co-kernel of the multiplication map 
\[m: H^0(C, \omega_C\otimes M_{a_i}\meno)\otimes H^0(C, \omega_C\otimes M_{a_i}\meno)\ra H^0(C, \omega_C(-2a_i)). \]
Since the co-kernel has dimension 1, it turns out that the rank 0 vertical extension is unique for every $a_i$, up to scalars. Thus, we have to study just six extensions, let us denote them $E_{a_i}.$ 

In order to produce a diagram like \eqref{diagramma croce g^1_4}, we observe that $h^0(C,E\otimes \eta)=3$: 2 generators come from $H^0(C,\mathcal{O}_C(1)\otimes \eta(a_i))$ and one from $H^0(C,\mathcal{O}_C(1)\otimes \eta(-a_i))$. The first two are not admissible since they would determine a skyscraper subsheaf of $\omega_C\otimes\eta$. Thus, it only remains to consider the section of $H^0(C,\mathcal{O}_C(1)\otimes \eta(-a_i))$. Notice that we already know the divisor in this linear series: it is $\hat{D_i}:=a_1+\dots+\hat{a}_i+\dots+a_6$. In short, we have to consider just 6 diagrams as the following:
\begin{equation}\label{diagramma croce g^2_6}
 \begin{tikzcd}
 &	&0	\arrow[d] & &\\
 &	&M_{a_i} \arrow[d]\arrow[equal]{r} & M_{a_i}\arrow[d]&\\
 0\arrow[r]&\eta\meno\arrow{r}\arrow[equal]{d}& E_{a_i}\arrow[r]\arrow[d]&\omega_C\otimes\eta\arrow[r]\arrow[d]&0\\
 0\arrow[r]&\eta\meno\arrow{r} & \omega_C\otimes M_{a_i}\meno	\arrow[d]\arrow[r] &\mathcal O_{\hat{D_i}}\arrow[r] &0\\
 & &0 & & 
 \end{tikzcd}
\end{equation}
Passing in cohomology, notice that the situation is very close to the one of diagram \eqref{diagramma croce g^2_5 in coomologia}. The point corresponding to the rank 1 horizontal extension in \eqref{diagramma croce g^2_6} lands in the plane $\langle\phi(\hat{D_i})\rangle$, which is exactly the same plane of Proposition \ref{immagini estensioni g^5_2}. Indeed, $H^0(C, \omega_C\otimes\eta(-\hat{D}))= H^0(C,\mathcal{O}_C(1)(a_i))=H^0(C,\mathcal{O}_C(1))\cong \mathbb{C}^3$. Since this is independent of $i$, we have the following:
\begin{prop}
The rank 1 special extensions in $Ext^1_{s,g^2_6}(\omega_C\otimes\eta,\eta\meno)$ correspond to 6 points in the plane $\langle\phi(D)\rangle\subset\mathbb{P}H^0(\omega_C\otimes \eta)^*$.
\end{prop}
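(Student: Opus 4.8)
The plan is to mirror, step by step, the argument already carried out for the case $L=\mathcal{O}_C(1)$ in Proposition \ref{immagini estensioni g^5_2}, exploiting the fact that the six bundles $E_{a_i}$ are rigid. First I would invoke the reformulation of Lemma \ref{estensioni verticali rango 0}: for each $i$ the rank $0$ vertical extension $0\ra M_{a_i}\ra E_{a_i}\ra \omega_C\otimes M_{a_i}\meno\ra 0$ is, up to a scalar, the unique one, because the cokernel of the multiplication map $H^0(C,\omega_C\otimes M_{a_i}\meno)\otimes H^0(C,\omega_C\otimes M_{a_i}\meno)\ra H^0(C,\omega_C(-2a_i))$ is one-dimensional. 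This already singles out exactly six bundles $E_{a_1},\dots,E_{a_6}$, hence at most six points, which is the numerical part of the statement.

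Next I would select, for each $E_{a_i}$, the unique admissible horizontal extension. Since $h^0(C,E_{a_i}\otimes\eta)=3$ and two of the three generators, namely those coming from $H^0(C,\mathcal{O}_C(1)\otimes\eta(a_i))$, must be discarded because they would force a skyscraper subsheaf of $\omega_C\otimes\eta$, the relevant section is the one spanning $H^0(C,\mathcal{O}_C(1)\otimes\eta(-a_i))$, whose zero divisor is $\hat{D_i}=a_1+\dots+\hat{a}_i+\dots+a_6$. This produces diagram \eqref{diagramma croce g^2_6}, whose right-hand vertical sequence is $0\ra M_{a_i}\ra \omega_C\otimes\eta\ra\mathcal{O}_{\hat{D_i}}\ra 0$.

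I would then pass to cohomology exactly as in diagram \eqref{diagramma croce g^2_5 in coomologia}. After dualizing, the map $f\colon H^0(C,\mathcal{O}_{\hat{D_i}})\ra H^0(C,\omega_C\otimes\eta)^*$ is the evaluation at the five points of $\hat{D_i}$, and the rank $1$ coboundary $\delta$ satisfies $\operatorname{Im}(\delta)\subset\operatorname{Im}(f)$. Hence the point of $\mathbb{P}H^0(C,\omega_C\otimes\eta)^*$ attached to the rank $1$ horizontal extension lies in $\mathbb{P}(\operatorname{Im} f)=\langle\phi(\hat{D_i})\rangle$, the linear span of the images of the points of $\hat{D_i}$ under the semicanonical map $\phi$.

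The crux, and the step I expect to require the most care, is to identify $\langle\phi(\hat{D_i})\rangle$ with the plane $\langle\phi(D)\rangle$ of Proposition \ref{immagini estensioni g^5_2}, uniformly in $i$. Here I would use $\mathcal{O}_C(1)\otimes\eta=\mathcal{O}_C(D)$ (coming from Lemma \ref{one_section}) to compute $\omega_C\otimes\eta(-\hat{D_i})=\mathcal{O}_C(1)(a_i)=M_{a_i}$, which for general $a_i$ satisfies $h^0(C,M_{a_i})=h^0(C,\mathcal{O}_C(1))=3$. Thus $\langle\phi(\hat{D_i})\rangle$ has projective dimension $5-3=2$, i.e. it is a plane; and since $\hat{D_i}\le D$ as divisors we get $\langle\phi(\hat{D_i})\rangle\subseteq\langle\phi(D)\rangle$, so the two planes coincide. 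As this is independent of $i$, all six points lie in the single plane $\langle\phi(D)\rangle$, which finishes the argument. The only genuine subtlety is the genericity input guaranteeing $h^0(C,M_{a_i})=3$, so that the span does not drop to a line; everything else is a formal transcription of the $g^2_5$ case.
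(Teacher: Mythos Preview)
Your proposal is correct and follows essentially the same route as the paper: uniqueness of the vertical extension via the cokernel of the multiplication map, selection of the unique admissible section with divisor $\hat{D_i}$, passage to cohomology as in diagram \eqref{diagramma croce g^2_5 in coomologia}, and the identification $\langle\phi(\hat{D_i})\rangle=\langle\phi(D)\rangle$ via $h^0(C,\omega_C\otimes\eta(-\hat{D_i}))=h^0(C,\mathcal{O}_C(1)(a_i))=3$. One minor remark: the equality $h^0(C,\mathcal{O}_C(1)(a_i))=h^0(C,\mathcal{O}_C(1))=3$ needs no genericity hypothesis on $a_i$, since $|\mathcal{O}_C(1)|$ is base-point free on a smooth plane quintic (equivalently, $h^0(C,\mathcal{O}_C(1)(-a_i))=2$ for every $a_i$), so your stated ``subtlety'' is in fact automatic.
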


To resume, our analysis shows what follows.
\begin{teo}
Let $(C,\eta, B)\in \mathcal{RQ}_{6,2}$ be a general element and let $\mathbb{P}I_2(C_\eta)$ be the space of the quadrics vanishing on the semicanonical model $C_\eta$ of $C$. Then the intersection of the quadrics consists of points landing on a projective plane and of a surface $S$. Moreover, there is a birational map from  a ruled surface with base curve $C_\eta$ to $S$ which is an isomorphism out of the preimages of a finite number of points in $S$. 
\end{teo}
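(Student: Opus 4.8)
The plan is to deduce the statement by assembling the results of the preceding three subsections, using the Lange--Sernesi correspondence as the bridge between the geometry of $\bigcap\mathbb{P}I_2(C_\eta)$ and the classification of rank-$1$ extensions. First I would recall from Lemma 1.2 of \cite{lange_sernesi} that a point of $\mathbb{P}H^0(C,\omega_C\otimes\eta)^*$ lies on every quadric of $I_2(C_\eta)$ if and only if it corresponds to an extension \eqref{estensioni} whose coboundary map has rank $1$. Thus describing the base locus $\bigcap\mathbb{P}I_2(C_\eta)$ amounts exactly to describing the image in $\mathbb{P}H^0(C,\omega_C\otimes\eta)^*$ of the set of all such rank-$1$ extensions, a set which the section has already partitioned.

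Next I would invoke Lemma \ref{lemma beauville}: every rank-$1$ extension $E$ admits a sub-line bundle $L$ with $h^0(C,L)\ge 2$ and $E/L$ a line bundle, and I would split according to whether $L$ is non-special or special. The non-special case was shown to trace out the semicanonical curve $C_\eta$ itself. For the special case, Lemma \ref{casi possibili} leaves exactly the three possibilities $L=\mathcal O_C(1)$, $L=L_p$ and $L=M_{a_i}$. The $g^2_5$ and $g^2_6$ extensions were both shown to land in the single plane $\langle\phi(D)\rangle\subset\mathbb{P}H^0(C,\omega_C\otimes\eta)^*$ (Proposition \ref{immagini estensioni g^5_2} and its $g^2_6$ analogue); since the $g^2_5$ family is the two-dimensional Veronese surface $\mathcal{V}$ dominating that plane, these together account for the planar component. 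The $g^1_4$ extensions, by contrast, sweep out the surface $S=\phi(S_1)$, where $S_1=\mathbb{P}Ext^1_{s,g^1_4}(\omega_C\otimes\eta,\eta\meno)$ is a $\mathbb{P}^1$-bundle over $C$.

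Finally I would package the $g^1_4$ contribution. By the Corollary to Proposition \ref{two_surfaces}, the map $\phi$ of \eqref{mappa moltiplicazione finale} satisfies the hypotheses of that proposition, so $\phi$ is birational onto $S$ and restricts to an isomorphism away from the preimage of at most one (hence a finite number of) point; this gives exactly the asserted birational map from the ruled surface $S_1$ over $C\cong C_\eta$ to $S$. It then remains only to observe that the non-special locus yields no new component: by Proposition \ref{curva C nella superficie} the distinguished section $\{t\mu_p\}$ of $S_1$ is carried by $\phi$ onto $C_\eta$, so the curve traced out by the non-special extensions already sits inside $S$. Combining, $\bigcap\mathbb{P}I_2(C_\eta)=\langle\phi(D)\rangle\cup S$, which is the claim.

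I would expect the only delicate point to be the bookkeeping: confirming that Lemma \ref{casi possibili} together with Lemma \ref{lemma beauville} genuinely exhausts every rank-$1$ extension, with no case left uncovered and no component counted twice, and that the base of the ruling on $S$ is correctly identified with $C_\eta$ rather than an abstract copy of $C$. The birationality and the ``finite number of points'' clause are inherited directly from Proposition \ref{two_surfaces}, which in fact furnishes the sharper bound of a single exceptional point.
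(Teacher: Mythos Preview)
Your proposal is correct and follows essentially the same approach as the paper: the theorem is a summary statement (``To resume, our analysis shows what follows''), and the proof is precisely the assembly of the Lange--Sernesi correspondence, Lemma~\ref{lemma beauville}, the non-special/special dichotomy, Lemma~\ref{casi possibili}, Propositions~\ref{immagini estensioni g^5_2}, \ref{curva C nella superficie}, \ref{two_surfaces} and its Corollary, exactly as you lay out. One small overstatement: your final equation $\bigcap\mathbb{P}I_2(C_\eta)=\langle\phi(D)\rangle\cup S$ asserts more than the paper proves or claims, since Propositions~\ref{immagini estensioni g^5_2} and the $g^2_6$ analogue only show that the relevant extension classes map \emph{into} the plane $\langle\phi(D)\rangle$, not that they fill it; the theorem accordingly says only that these points \emph{land on} a projective plane.
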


Using this result now we can finish the proof of our main theorem \ref{thm_gen_tor}. Indeed, by using the differential of the Prym map at a generic point $(C,\eta, B)$ we obtain a family of quadrics whose intersection is, up to a finite number of points and a linear variety of dimension $2$, a surface $S$ as in the Theorem above. The curve $C_{\eta}$ is contained in this surface. Assume that there is another element in the fibre $(C',\eta',B')$. Then the curve $C'_{\eta '}$ is also contained in $S$. The preimage by $\phi $ of this curve has only one component  $C''$ birational to $C'_{\eta '}$. Then the map $C''\lra C$ must be birational and therefore $C\cong C'$. 
Moreover, the surface $S$ is either isomorphic to the ruled surface $S_1=\mathbb{P}Ext^1_{s,g^1_4}(\omega_C\otimes\eta, \eta\meno)$ or it is a cone with base $C_{\eta}$. In both cases the curve $C'_{\eta'}$ projects to $C_{\eta}$ (following the rulling in the first case or projecting from the vertex in the second). Hence this isomomorphism send $\eta '$ to $\eta$.  

\bibliographystyle{amsalpha}

\begin{thebibliography}{AAAAA11}



\bibitem{acgh} E. Arbarello, M. Cornalba, P.A: Griffiths, J. Harris,
Geometry of algebraic curves. {V}ol. {I},
Grundlehren der Mathematischen Wissenschaften [Fundamental
              Principles of Mathematical Sciences],
    vol. 267, Springer-Verlag, New York, 1985.


\bibitem{beauBook}
A. Beauville, Surfaces alg\'{e}briques complexes, Algebraic surfaces,
C.I.M.E. Summer Sch., 76, 5--56, Springer, Heidelberg, 2010.


\bibitem{be_invent} A. Beauville,
 Prym varieties and the {S}chottky problem,
  Invent. Math.
   41, (1977), 149--196.


\bibitem{do_tetr}
R. Donagi, The tetragonal construction, Bull. Amer. Math. Soc. (N.S.),
American Mathematical Society. Bulletin. New Series, 4 (1981), 181--185.


\bibitem{fns}
P. Frediani, J.C. Naranjo, I. Spelta,
    The fibers of the ramified {P}rym map,
Commun. Contemp. Math., 24 (2022), n. 5, 30 pages.
	

\bibitem{green_laz}
M. Green, R. Lazarsfeld,
On the projective normality of complete linear series on an
              algebraic curve, Invent. Math. 83 (1986), 73--90.



\bibitem{lange_sernesi}
H. Lange, E. Sernesi, Quadrics containing a {P}rym-canonical curve, J. Algebraic Geom.
5 (1996), 387--399.


\bibitem{mp}
V. Marcucci, G.P. Pirola, Generic {T}orelli theorem for {P}rym varieties of ramified
              coverings, Compos. Math., 148 (2012), 1147--1170.

\bibitem{mu}
D. Mumford, Prym varieties. {I}, Contributions to analysis (a collection of papers dedicated to {L}ipman {B}ers), pp.325--350, Academic Press, New York (1974).


\bibitem{naranjo-ortega2}
 J.C. Naranjo, A. Ortega, Global {P}rym-{T}orelli for double coverings ramified in at
              least six points,
J. Algebraic Geom., 31 (2022), 387--396.



\bibitem{nagaraj_ramanan}
D.S. Nagaraj, S. Ramanan, Polarisations of type {$(1,2,\cdots,2)$} on abelian varieties, 
Duke Math. J. 80, (1995), 157--194.


\end{thebibliography}

\end{document}